\documentclass[reqno,twoside]{amsart}

\usepackage{amsfonts,amsmath,amssymb,amsthm}
\usepackage{mathtools}
\usepackage{etoolbox}
\usepackage{color}
\usepackage[english]{babel}
\usepackage{hyperref}
\usepackage{microtype}
\usepackage{fullpage}
\usepackage{floatrow}
\usepackage[utf8]{inputenc}
\usepackage[T1]{fontenc}
\usepackage[mathscr]{euscript}

\hfuzz=4pt

\usepackage{float}

\usepackage{placeins} 
\usepackage{flafter} 

\DisableLigatures{encoding = *, family = * }

\newtheorem{theorem}{Theorem}[section]

\newtheorem{lemma}[theorem]{Lemma}
\newtheorem{proposition}[theorem]{Proposition}

\numberwithin{equation}{section}

\def\NN{\mathbb{N}}
\def\RR{\mathbb{R}}
\def\CC{\mathbb{C}}
\def\PP{\mathcal{P}}
\def\B{\mathcal{B}}
\def\C{\mathcal{C}}
\def\M{\mathcal{M}}
\newcommand{\norm}[2]{{\left\|#1\right\|}_{#2}}
\newcommand{\fl}[2]{(-\Delta)^{\,#1}#2}
\newcommand{\ffl}[2]{(-d_x^{\,2})^{#1}#2}
\newcommand{\cns}{c_{N,s}}

\title[Control non-local Schr\"odinger equation]{Internal control for a non-local Schr\"odinger equation involving the fractional Laplace operator}

\author[U. Biccari]{}
\email{umberto.biccari@deusto.es}
\email{u.biccari@gmail.com}

\author{Umberto Biccari\textsuperscript{\,$\ast$}}  
\address{\textsuperscript{$\ast$}\, [1] Chair of Computational Mathematics, Fundaci\'on Deusto, Avenida de las Universidades 24, 48007 Bilbao, Basque Country, Spain} 
\address{[2]\,Facultad de Ingenier\'ia, Universidad de Deusto, Avenida de las Universidades 24, 48007 Bilbao, Basque Country, Spain.}
\email{umberto.biccari@deusto.es, u.biccari@gmail.com}

\thanks{This project has received funding from the European Research Council (ERC) under the European Union’s Horizon 2020 research and innovation programme (grant agreement NO: 694126-DyCon). This work was partially supported by the Grant MTM2017-92996-C2-1-R COSNET of MINECO (Spain), by the Elkartek grant KK-2020/00091 CONVADP of the Basque government and by the Grant FA9550-18-1-0242 of AFOSR}

\keywords{Fractional Laplacian, Schr\"odinger equation, controllability, Pohozaev identity}
\subjclass[2010]{35R11, 35S05, 35S11, 93B05, 93B07}

\begin{document}
	
\maketitle

\begin{abstract}
We analyze the interior controllability problem for a non-local Schr\"odinger equation involving the fractional Laplace operator $\fl{s}{}$, $s\in(0,1)$, on a bounded $C^{1,1}$ domain $\Omega\subset\RR^N$. We first consider the problem in one space dimension and we employ spectral techniques to prove that, for $s\in[1/2,1)$, null-controllability is achieved by employing an $L^2(\omega\times(0,T))$ function acting in a subset $\omega\subset\Omega$ of the domain. This result is then extended to the multi-dimensional case by applying the classical multiplier method, joint with a Pohozaev-type identity for the fractional Laplacian. 
\end{abstract}

\section{Introduction and main results}\label{intro_sec}

Let $T>0$ be a real number, $\Omega\subset\RR^N$ be a bounded $C^{1,1}$ domain, $\Omega^c:=\RR^N\setminus\Omega$, and set $Q:=\Omega\times(0,T)$ and $Q^c:=\Omega^c\times(0,T)$. In this work, we analyze the controllability properties of the following non-local Schr\"odinger equation 
\begin{align}\label{fr_schr_control}
	\begin{cases}
		iu_t+\fl{s}{u} = h\chi_\omega, & (x,t)\in Q
		\\ 
		u\equiv 0, & (x,t)\in Q^c
		\\ 
		u(x,0) = u_0(x), & x\in\Omega.
	\end{cases}
\end{align}

In \eqref{fr_schr_control}, $u_0\in L^2(\Omega)$ is a given initial datum and the control $h\in L^2(\omega\times(0,T))$ is applied on a neighborhood $\omega$ of the boundary of $\Omega$, defined as 
\begin{align}\label{neigh_bd_def}
	\omega:=\mathcal{O}_{\varepsilon}\cap\Omega,\quad \mathcal{O}_{\varepsilon}:=\bigcup_{x\in\Gamma_0} B(x,\varepsilon), \quad\varepsilon>0,
\end{align} 
where
\begin{align}\label{partition}
	\Gamma_0=\big\{x\in\partial\Omega\,|\,(x\cdot\nu)>0\,\big\},\quad \Gamma_1=\partial\Omega\setminus\Gamma_0,
\end{align}  
and $\nu$ is the unit normal vector to $\partial\Omega$ at $x$ pointing towards the exterior of $\Omega$ (see Figure \ref{partition_fig}).

\begin{figure}
	\centering
	\includegraphics[scale=1]{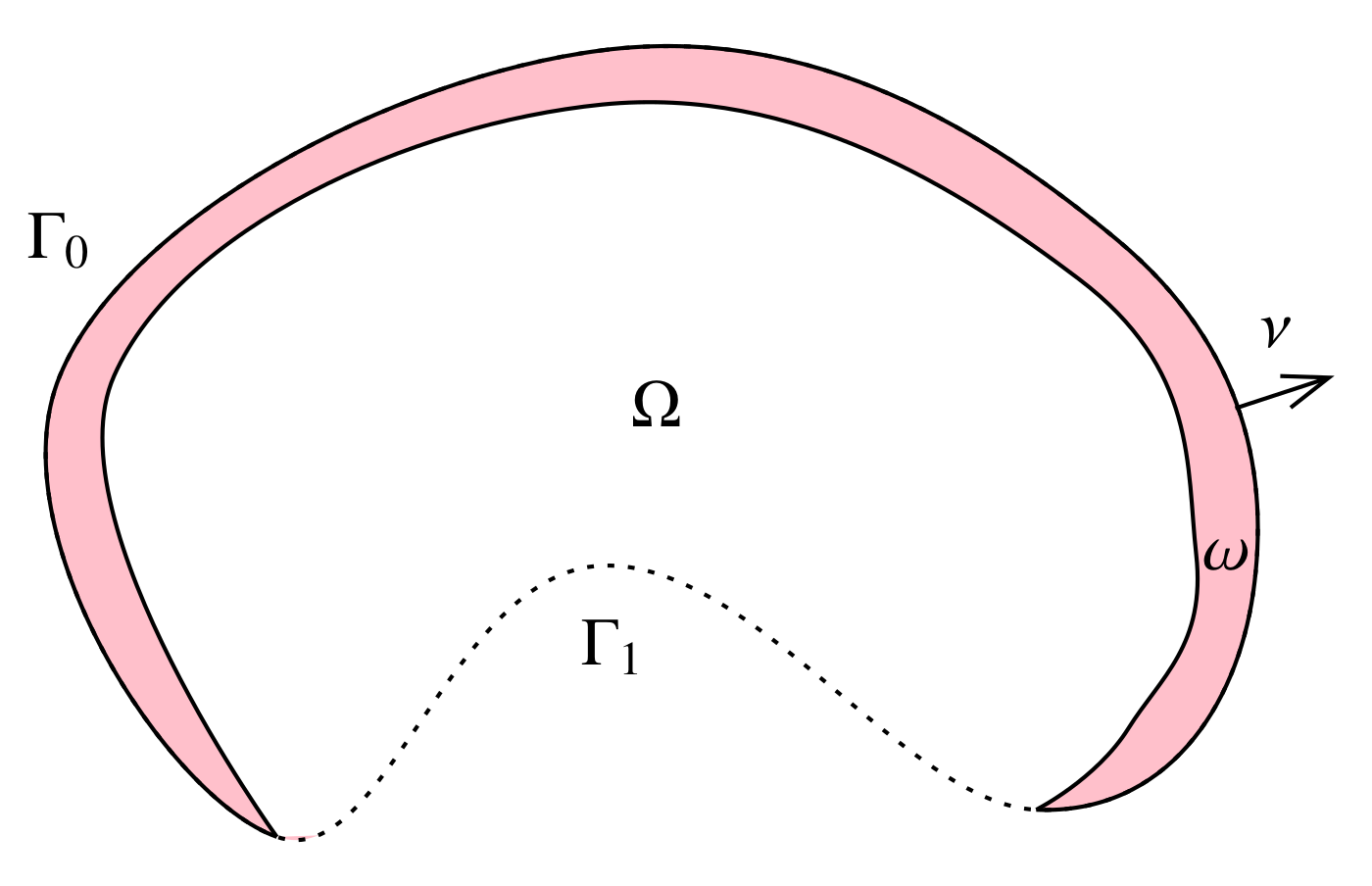}
	\caption{The domain $\Omega$ with the partition $(\Gamma_0,\Gamma_1)$ of its boundary and the neighborhood $\omega$ of $\Gamma_0$.}\label{partition_fig}
\end{figure}
	
Moreover, with $\fl{s}{}$ we denote the fractional Laplace operator whose precise definition will be given in the next section.

Fractional order operators (in particular the fractional Laplacian) have recently emerged as a modeling alternative in various branches of science. From the long list of phenomena which are more appropriately modeled by fractional differential equations, we mention: anomalous transport and diffusion (\cite{bologna2000anomalous,meerschaert2012fractional}), elasticity (\cite{dipierro2015dislocation}), image processing (\cite{antil2019external,gilboa2008nonlocal}), porous media flow (\cite{vazquez2012nonlinear}), and population dynamics (\cite{viswanathan1996levy}). In particular, space-fractional Schr\"odinger equations have been introduced by Laskin in quantum mechanics \cite{laskin2000fractional,laskin2002fractional,laskin2000quantum}), since they provide a natural extension of the standard local model when the Brownian trajectories in Feynman path integrals are replaced by L\'evy flights. Applications of this model may be found in the study of a condensed-matter realization of L\'evy crystals (\cite{stickler2013potential}). More recently, the fractional Schr\"odinger equation was introduced into optics by Longhi in \cite{longhi2015fractional}, with applications to laser implementation. In addition to that, concentration phenomena for the fractional Schr\"odinger equation have been studied in \cite{davila2015concentration,davila2014concentrating}.

Moreover, a number of stochastic models for explaining anomalous diffusion have been introduced in the literature. Among them, we quote the fractional Brownian motion, the continuous time random walk, the Lévy flights, the Schneider grey Brownian motion, and, more generally, random walk models based on evolution equations of single and distributed fractional order in space (see e.g. \cite{dubkov2008levy,gorenflo2007continuous,mandelbrot1968fractional}). In general, a fractional diffusion operator corresponds to a diverging jump length variance in the random walk. In the literature, the fractional Laplace operator is known as the generator of the so called s-stable Lévy process. We refer to \cite{gal2017nonlocal,valdinoci2009from} and the references therein for further details.

The controllability properties of fractional PDE are still not fully understood by the mathematical community, and only few results are currently available in the literature. 

In \cite{biccari2018controllability}, the null controllability of the one-dimensional fractional heat equation has been treated by using the gap condition on the eigenvalues. This result has been extended to the case of controls acting from the exterior of the domain in \cite{warma2018null}. Moreover, the controllability properties of the one-dimensional fractional heat equation under positivity constraints on the control have been studied in \cite{antil2019controllability,biccari2019controllability}. In space dimension $N\geq 2$, the best possible controllability result available for the fractional heat equation is the approximate controllability recently obtained in \cite{warma2019approximate}. 

Concerning wave-like models, the approximate controllability from the exterior of fractional wave equations has been proved in \cite{louis2018approximate,warma2018analysis}, while \cite{biccari2019null} treats the null-controllability of a one-dimensional fractional wave equation with memory. Nonetheless, as far as the author knows, there are currently no controllability results for the fractional Schr\"odinger equation.

In the present paper, we are interested in studying precisely this issue. In more detail, we are going to prove the following result.
\begin{theorem}\label{schr_control_thm}
Let $\Omega\subset\RR^N$ be a bounded $C^{1,1}$ domain and $s\in[1/2,1)$. Moreover, let $\omega\subset\Omega$ be a neighborhood of $\Gamma_0$, defined as in \eqref{neigh_bd_def}. 
\begin{enumerate}
	\item[(i)] If $s\in(1/2,1)$, for any $T>0$ and any given initial datum $u_0\in L^2(\Omega)$ there exists a function $h\in L^2(\omega\times(0,T))$ such that the corresponding solution $u$ of \eqref{fr_schr_control} satisfies $u(x,T)=0$.
	\item[(ii)] If $s=1/2$, there exists a strictly positive time $T_0>0$ such that the same controllability result as in (i) holds for any $T\geq T_0$. 
\end{enumerate} 
Besides, in both cases there exists a positive constant $\C>0$ such that 
\begin{align*}
	\norm{h}{ L^2(\omega\times(0,T))}\leq \mathcal C\,\norm{u_0}{L^2(\Omega)}.
\end{align*}
\end{theorem}

As it will be clear in the further sections, the requirement of strict positivity of the controllability time when $s=1/2$ arises naturally in the proof of our Theorem \eqref{schr_control_thm}. More details will be given later. We anticipate that, as we will see in Section \ref{1d_sec}, in the one-dimensional case we can provide an explicit lower estimate for $T_0$.

The proof of Theorem \ref{schr_control_thm} is based on the classical Hilbert Uniqueness Method (HUM, \cite{coron2009control,lions1988controlabilite,lions1988exact}), according to which the controllability of \eqref{fr_schr_control} is equivalent to the following observability inequality 
\begin{align}\label{schr obs}
	\norm{v_0}{ L^2(\Omega)}^2\leq \mathcal C\int_0^T\norm{v(t)}{ L^2(\omega)}^2\,dt,
\end{align}
where, for any $v_0\in L^2(\Omega)$, $v$ is the solution of the adjoint system 
\begin{align}\label{fr_schr_adj}
	\begin{cases}
		iv_t+\fl{s}{v}=0, & (x,t)\in Q
		\\ 
		v\equiv 0, & (x,t)\in Q^c
		\\ 
		v(x,0)=v_0(x), & x\in\Omega.
	\end{cases}
\end{align}

For obtaining this inequality, we will distinguish two cases in which we will use different methodologies: 
\begin{itemize}
	\item In one space dimension, \eqref{schr obs} will be proved by employing spectral techniques and Ingham-type estimates (\cite{ingham1936some}), following the by now standard approach described, for instance, in \cite{micu2004introduction}. In this framework, we will also show that $s\in[1/2,1)$ is a necessary condition for the positivity of the controllability result. Indeed, our analysis yields that, at least in one space-dimension, when the power of the fractional Laplacian is below the critical value $s=1/2$, equation \eqref{fr_schr_control} fails to be controllable. This fact is a direct consequence of the spectral results contained in \cite{kulczycki2010spectral,kwasnicki2012eigenvalues}. For completeness, we shall stress that, as we will see in Section \ref{1d_sec}, for the employment of the mentioned spectral techniques the requirement that $\omega$ is a neighborhood of the boundary is actually not necessary. Therefore, in the one-dimensional case the support of the control may be any general open subset.
	\item In dimension $N\geq 2$, \eqref{schr obs} will be obtained by applying the multiplier method (\cite{komornik1994exact}) and a Pohozaev-type identity for the fractional Laplacian, which has been proved in \cite{ros2014pohozaev} and extends to the fractional case the well-known identity due to Pohozaev (see \cite{pohozaev1965eigenfunctions}). 	
\end{itemize}

The rest of the paper is organized as follows. Section \ref{fl_sec} is devoted to the presentation of the functional setting in which we will work. Moreover, we recall there some results related to the fractional Laplace operator, the regularity of the associated Dirichlet problem, the Pohozaev-type identity and the principal spectral properties in the one-dimensional case. We conclude the section by discussing the well-posedness of the fractional Schr\"odinger equation \eqref{fr_schr_control}. In Section \ref{1d_sec}, we treat the one-dimensional controllability problem by employing standard spectral techniques. Section \ref{fl_schr_sec} will be devoted to the multi-dimensional case. We start by obtaining a Pohozaev-type identity for \eqref{fr_schr_control}, which we will later apply for proving the observability inequality \eqref{schr obs}. Our main result, Theorem \ref{schr_control_thm}, will then be a consequence of this inequality. Finally, Section \ref{open_pb_sec} is devoted to some open problems and perspectives related to our work. 

\section{Fractional Laplace operator: definition, Dirichlet problem and Pohozaev-type identity}\label{fl_sec}

In this Section, we introduce some preliminary results that will be useful in the remainder of the paper. 

We start by giving a rigorous definition of the fractional Laplace operator. To this end, for $s\in(0,1)$, let us consider the space
\begin{align*}
	\mathcal L^1_s\left(\RR^N\right) :=\left\{ u:\RR^N\longrightarrow\RR^N\,:\; u\textrm{ measurable },\;\int_{\RR^N}\frac{|u(x)|}{(1+|x|)^{N+2s}}\,dx<+\infty\right\}.
\end{align*}
For any $u\in\mathcal L_s^1\left(\RR^N\right)$ and $\varepsilon>0$, we set 
\begin{align*}
	(-\Delta)^s_{\varepsilon}\, u(x) = \cns\,\int_{|x-y|>\varepsilon}\frac{u(x)-u(y)}{|x-y|^{N+2s}}\,dy,\;\;\; x\in\RR^N,
\end{align*}
with $\cns$ an explicit normalization constant given by 
\begin{align*}
	\cns:=\frac{s2^{2s}\Gamma\left(\frac{N+2s}{2}\right)}{\pi^{N/2}\Gamma(1-s)},
\end{align*}
$\Gamma$ being the Euler Gamma function. The fractional Laplacian is then defined by the following singular integral
\begin{align}\label{fl}
	\fl{s}{u}(x) = \cns\,P.V.\,\int_{\RR^N}\frac{u(x)-u(y)}{|x-y|^{N+2s}}\,dy = \lim_{\varepsilon\to 0^+} (-\Delta)^s_{\varepsilon} u(x), \;\;\; x\in\RR^N,
\end{align}
provided that the limit exists. 

We notice that, if $s\in(0,1/2)$ and $u$ is a smooth function, for example bounded and Lipschitz continuous on $\RR^N$, then the integral in \eqref{fl} is in fact not really singular near $x$ (see e.g. \cite[Remark 3.1]{di2010hitchhiker}). Moreover, $\mathcal L_s^1\left(\RR^N\right)$ is the right space for which $v:= (-\Delta)^s_{\varepsilon}\, u$ exists for every $\varepsilon > 0$, $v$ being also continuous at the continuity points of $u$ (see \cite{warma2015fractional}).

We also mention that the fractional Laplace operator is a pseudo-differential operator with symbol $|\xi|^{2s}$ (see, e.g., \cite[Proposition 3.3]{di2010hitchhiker}). For more details on the fractional Laplacian we refer to \cite{di2010hitchhiker,valdinoci2009from,warma2015fractional} and the references therein.

Let us now define the function spaces in which we are going to work. It is well-known (see, e.g., \cite{di2010hitchhiker}) that the natural functional setting for problems involving the fractional Laplacian is the one of the fractional Sobolev spaces. Since these spaces are not as familiar as the classical integral order ones, for the sake of completeness, we recall here their definition and some fundamental properties. 

Given $s\in(0,1)$ and $\Omega\subset\RR^N$ a regular enough domain, the fractional Sobolev space $H^s(\Omega)$ is defined as
\begin{align*}
	H^s(\Omega):= \left\{u\in L^2(\Omega)\,:\, \frac{|u(x)-u(y)|}{|x-y|^{\frac N2+s}}\in L^2(\Omega\times\Omega) \right\}.
\end{align*}
It is classical that this is a Hilbert space, endowed with the norm (derived from the scalar product)
\begin{align*}
	\norm{u}{H^s(\Omega)} := \Big(\norm{u}{L^2(\Omega)}^2 + |u|_{H^s(\Omega)}^2\,\Big)^{\frac 12},
\end{align*}
where the term 
\begin{align*}
	|u|_{H^s(\Omega)}:= \left(\int_\Omega\int_\Omega \frac{|u(x)-u(y)|^2}{|x-y|^{N+2s}}\,dxdy\right)^{\frac 12}
\end{align*}
is the so-called Gagliardo semi-norm of $u$. Let us now denote 
\begin{align*}
	H^s_0(\Omega):= \overline{C^{\infty}_0(\Omega)}^{\,H^s(\Omega)}
\end{align*}
the closure of the continuous infinitely differentiable functions with compact support in $\Omega$ with respect to the $H^s(\Omega)$-norm. The following facts are well-known.
\begin{itemize}
	\item[$\bullet$] For $0<s\leq 1/2$, the identity $H^s_0(\Omega) = H^s(\Omega)$ holds. This is because, in this case, the $C^{\infty}_0(\Omega)$ functions are dense in $H^s(\Omega)$ (see, e.g., \cite[Theorem 11.1]{lions1968problemes}).
	
	\item[$\bullet$] For $1/2<s<1$, we have $H^s_0(\Omega)=\left\{ u\in H^s(\RR^N)\,:\,u=0\textrm{ in } \Omega^c\right\}$ (see \cite{fiscella2015density}).
\end{itemize}

Finally, in what follows, we will indicate with $H^{-s}(\Omega)=\left(H^s(\Omega)\right)^\ast$ the dual of $H^s(\Omega)$ with respect to the pivot space $L^2(\Omega)$.

We now present a couple of results which will be used in the rest of the paper. Let us consider the Dirichlet problem associated to the fractional Laplace operator
\begin{align}\label{fl dirichlet pb}
	\left\{\begin{array}{rl}
		\fl{s}{u}=g, & x\in\Omega,
		\\ u\equiv 0, & x\in\Omega^c.
	\end{array}\right.
\end{align}

By means of the Lax-Milgram Lemma (see, e.g., \cite[Proposition 2.1]{biccari2017local}), it is not difficult to see that, if $g\in H^{-s}(\Omega)$, then \eqref{fl dirichlet pb} admits a unique solution $u\in H_0^s(\Omega)$ satisfying the norm estimate
\begin{align}\label{Hs_norm}
	\norm{u}{H^s(\Omega)}\leq \C\norm{g}{H^{-s}(\Omega)},
\end{align}
where $\C$ is a positive constant independent of $g$. Moreover, in \cite[Theorem 1.3]{biccari2017local}, the following local regularity property for \eqref{fl dirichlet pb} has been obtained.
\begin{proposition}\label{local_reg_prop}
Let $g\in H^{-s}(\Omega)$ and let $u\in H^s_0(\Omega)$ be the unique weak solution to the Dirichlet problem \eqref{fl dirichlet pb}. If $g\in L^2(\Omega)$, then $u\in H^{2s}_{loc}(\Omega)$.
\end{proposition}	

Notice that, when $s\geq 1/2$, Proposition \ref{local_reg_prop} and classical embedding results for the fractional Sobolev spaces (see, e.g., \cite[Proposition 2.1]{di2010hitchhiker}) yield that the solution of \eqref{fl dirichlet pb} is in $H^1_{loc}(\Omega)$. 

In addition to that, in \cite[Proposition 1.6]{ros2014pohozaev} the following has been proved.

\begin{proposition}\label{fl pohozaev prop}
Let $\Omega$ be a bounded $C^{1,1}$ domain of $\RR^N$, $s\in (0,1)$ and $\delta(x)=\mbox{dist}(x,\partial\Omega)$, with $x\in\Omega$, be the distance of a point $x$ from $\partial\Omega$. Let $u\in H_0^s(\Omega)$ satisfy the following:
\begin{itemize}
	\item[(i)]$u\in C^s(\RR^N)$ and, for every $\beta\in [s,1+2s)$, $u$ is of class $C^{\beta}(\Omega)$ and  
	\begin{align*}
		[u]_{C^{\beta}(\{x\in\Omega\vert\delta (x)\ge\rho\})}\le \mathcal C\rho^{s-\beta}, \;\;\;\textrm{ for all } \rho\in (0,1).
	\end{align*} 
	\item[(ii)] The function $u/\delta^s\vert_{\Omega}$ can be continuously extended to $\overline{\Omega}$. Moreover, there exists $\gamma\in (0,1)$ such that $u/\delta^s\in C^{\gamma}(\overline{\Omega})$. In addition, for all $\beta\in [\gamma, s+\gamma]$  it holds the estimate 
	\begin{align*}
		[u/\delta^s]_{C^{\beta}(\{x\in\Omega\vert\delta (x)\ge\rho\})}\le \mathcal C\rho^{\gamma-\beta}, \;\;\;\textrm{ for all } \rho\in (0,1).
	\end{align*}
	\item[(iii)]$\fl{s}{u}$ is point-wise bounded in $\Omega$.
\end{itemize}
Then, the following identity holds
\begin{align}\label{fl pohozaev}
	\int_{\Omega}(x\cdot\nabla u)\fl{s}{u}\,dx=\frac{2s-N}{2}\int_{\Omega}u\fl{s}{u}\,dx-\frac{\Gamma(1+s)^2}{2}\int_{\partial\Omega}\left(\frac{u}{\delta^{s}}\right)^2(x\cdot\nu)\,d\sigma,
\end{align}
where $\nu$ is the unit outward normal to $\partial\Omega$ at $x$ and $\Gamma$ is the Gamma function.
\end{proposition}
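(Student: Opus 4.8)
The identity (\ref{fl pohozaev}) is precisely \cite[Prop. 1.6]{fr pohozaev}, so in practice I would simply quote it; but the route I would take to reprove it is to realize $\fl{s}{u}$ through the Caffarelli--Silvestre extension and run a Rellich--Pohozaev multiplier argument in one more dimension. Given $u$ as in the statement, let $U=U(x,y)$ be its $s$-harmonic extension to $\RR^{n+1}_+=\{(x,y):x\in\RR^n,\ y>0\}$, i.e. the solution of $\mathrm{div}(y^{1-2s}\nabla U)=0$ in $\RR^{n+1}_+$ with $U(\cdot,0)=u$. I would use the two standard facts: $\fl{s}{u}=-d_s\lim_{y\to 0^+}y^{1-2s}\partial_y U$ for an explicit constant $d_s=d_s(n,s)>0$, and the energy identity $\int_{\RR^{n+1}_+}y^{1-2s}|\nabla U|^2\,dx\,dy=\tfrac1{d_s}\int_{\RR^n}u\,\fl{s}{u}\,dx$. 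Hypotheses (i)--(ii) on $u$ translate into the statements that $U$ is smooth up to $\{y=0\}$ away from the singular set $\Sigma:=\partial\Omega\times\{0\}$, that $\nabla U\sim\delta^{s-1}$ near $\Sigma$ (so $y^{1-2s}|\nabla U|^2$ is only borderline integrable as $s\to 1$), and that near a point $(x_0,0)\in\Sigma$ suitable rescalings of $U$ converge to $(u/\delta^s)(x_0)$ times a fixed explicit $s$-homogeneous $s$-harmonic profile.

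Next I would fix $\rho>0$, remove from $\RR^{n+1}_+$ a $\rho$-tubular neighbourhood of $\Sigma$ and also truncate at a large radius $R$, obtaining a domain $D_\rho$ on which $U$ is smooth enough to integrate by parts. Multiplying $\mathrm{div}(y^{1-2s}\nabla U)=0$ by $z\cdot\nabla U$, $z=(x,y)$, and integrating by parts twice, using $\mathrm{div}(y^{1-2s}z)=(n+2-2s)y^{1-2s}$, the bulk terms collapse and one is left with
\begin{displaymath}
\frac{2s-n}{2}\int_{D_\rho}y^{1-2s}|\nabla U|^2\,dz=\int_{\partial D_\rho}y^{1-2s}(\partial_\eta U)(z\cdot\nabla U)\,d\sigma-\frac12\int_{\partial D_\rho}y^{1-2s}(z\cdot\eta)|\nabla U|^2\,d\sigma .
\end{displaymath}
On the flat part of $\partial D_\rho$ one has $z\cdot\eta=0$ and $\eta=-e_y$, and since $u\equiv 0$ on $\Omega^c$ the flat integral is supported in $\{\delta\ge\rho\}$; letting $R\to\infty$ (the spherical piece vanishes by the decay of $U$), the flat contribution becomes $\tfrac1{d_s}\int_{\{\delta\ge\rho\}}(x\cdot\nabla u)\fl{s}{u}\,dx$, which is legitimate because $\fl{s}{u}$ is bounded (hypothesis (iii)) and $x\cdot\nabla u\in L^1(\Omega)$ thanks to the bound $|\nabla u|\lesssim\delta^{s-1}$ extracted from (i).

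It then remains to let $\rho\to 0$. The bulk integral converges to $\int_{\RR^{n+1}_+}y^{1-2s}|\nabla U|^2=\tfrac1{d_s}\int_\Omega u\,\fl{s}{u}$ and the flat term to $\tfrac1{d_s}\int_\Omega(x\cdot\nabla u)\fl{s}{u}$; the whole content of the proposition is that the flux through the $\rho$-tube around $\Sigma$ does \emph{not} vanish, but converges to $\tfrac1{d_s}\cdot\tfrac{\Gamma(1+s)^2}{2}\int_{\partial\Omega}(u/\delta^s)^2(x\cdot\nu)\,d\sigma$. To see this I would localise near each $(x_0,0)\in\Sigma$, rescale by the tube radius, and insert the blow-up of the first step, using the quantitative Hölder estimates of hypotheses (i)--(ii) — precisely the bounds with weights $\rho^{s-\beta}$ and $\rho^{\gamma-\beta}$, which provide the uniform control needed to commute the limit with the integral. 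The limiting flux is the flux of the explicit homogeneous profile through a half-circle, multiplied by $(u/\delta^s)^2(x_0)\,(x_0\cdot\nu)$, and an exact one-dimensional computation (equivalently, the normalisation of the one-dimensional Poisson kernel and $\fl{s}{}(x_+^s)=0$ on $(0,\infty)$) produces the constant $\Gamma(1+s)^2/2$. Collecting the three limits and multiplying through by $d_s$ yields (\ref{fl pohozaev}).

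I expect the last step to be the only genuine difficulty. Everything else is bookkeeping with the Rellich multiplier $z\cdot\nabla U$ and with the Caffarelli--Silvestre energy identity; by contrast, the non-vanishing of the tubular flux and the identification of its value with $\tfrac{\Gamma(1+s)^2}{2}\int_{\partial\Omega}(u/\delta^s)^2(x\cdot\nu)$ is exactly where the fine boundary regularity of hypotheses (i)--(ii) is consumed, and is the reason the identity is subtle in the fractional setting — in the local case $s=1$ the corresponding boundary term is simply $\tfrac12\int_{\partial\Omega}(\partial_\nu u)^2(x\cdot\nu)$ and no such blow-up analysis is needed.
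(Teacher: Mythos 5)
Your instinct to simply quote \cite[Prop.~1.6]{fr pohozaev} coincides with what the paper actually does: Proposition \ref{fl pohozaev prop} is imported from Ros-Oton and Serra and no proof of it is given in this paper, so on that level your proposal and the text agree. The extension-based sketch you add is, however, a genuinely different route from the cited proof: Ros-Oton and Serra do not pass through the Caffarelli--Silvestre extension, but work directly with $w=\fl{s/2}{u}$, rewriting $\int_\Omega (x\cdot\nabla u)\fl{s}{u}\,dx$ in terms of $\frac{d}{d\lambda}\big|_{\lambda\to 1^+}\int_{\RR^n}w_\lambda w\,dx$ with $w_\lambda(x)=w(\lambda x)$, and they extract the boundary term from a fine description of the logarithmic singularity of $w$ near $\partial\Omega$, the constant $\Gamma(1+s)^2$ arising from an explicit one-dimensional computation for $x_+^s$; your Rellich multiplier $z\cdot\nabla U$ in $\RR^{n+1}_+$ instead trades that for a blow-up analysis of the flux through a shrinking tube around $\partial\Omega\times\{0\}$. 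The extension route is a legitimate alternative and has the virtue of making transparent where hypotheses (i)--(iii) are consumed, but as written it is an outline rather than a proof: the decisive step --- showing that the tube flux converges exactly to $\frac{1}{d_s}\,\frac{\Gamma(1+s)^2}{2}\int_{\partial\Omega}\left(\frac{u}{\delta^s}\right)^2(x\cdot\nu)\,d\sigma$, with the stated constant --- is only asserted via convergence of rescalings to a homogeneous profile, and that is precisely where all the difficulty of the fractional identity is concentrated. For the purposes of this paper, which uses the identity as a black box, the citation suffices; if you intend your sketch to stand alone, the tube-flux computation (uniformity of the blow-up over $\partial\Omega$, justification of exchanging limit and integral, and the one-dimensional evaluation of the limiting profile's flux) must be carried out in full.
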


In the Proposition \ref{fl pohozaev prop}, following the notation introduced in \cite{ros2014dirichlet,ros2014extremal,ros2014pohozaev}, $C^{\beta}(\Omega)$ with $\beta>0$ indicates the space $C^{k,\,\beta'}(\Omega)$, where $k$ is the greatest integer such that $k<\beta$ and $\beta'=\beta-k$.

Finally, in the remaining of our work we will also need the following technical results. 

\begin{proposition}
Let $u$,$v$ be two functions in $H^s_0(\Omega)$. Then, it holds the integration formula
\begin{align}\label{integration}
	\int_{\Omega}v\fl{s}{u}dx=\int_{\RR^N}\fl{\frac{s}{2}}{u}\fl{\frac{s}{2}}{v}dx=\int_{\Omega}u\fl{s}{v}dx.
\end{align}
\end{proposition}

\begin{proof}
The proof of \eqref{integration} is a simple application of Plancherel's theorem and the fact that the fractional Laplacian is the pseudo-differential operator associated to the symbol $|\xi|^{2s}$, that is 
\begin{align*}
	\fl{s}{u}=\mathscr{F}^{-1}\bigg(|\xi|^{2s}\mathscr{F}u\bigg), \;\; \textrm{ for all }\xi\in\RR^N, \; u\in H^s(\RR^N).
\end{align*}
	
To avoid any issue on the domain of $\mathscr{F}$ and $\mathscr{F}^{-1}$ one shall first take $u\in C_0^{\infty}(\Omega)$ and then use density properties. We leave the details to the reader.
\end{proof}

\begin{proposition}\label{operator T lemma}
Let $\Omega\subset\RR^N$ be a bounded regular domain. For every $g,h\in H^1_0(\Omega)$, let us define  
\begin{align*}
	T(g,h):=\int_{\Omega}\overline{g}(x\cdot\nabla h)\,dx.
\end{align*}
	
Then, for all $s\in [1/2,1)$ there exists a positive constant $\mathcal C$, depending only on $N$, $s$ and $\Omega$, such that
\begin{align}\label{operator T estimate 1}
	|T(g,h)|\leq \C\,\norm{g}{H^{1-s}_0(\Omega)}\norm{h}{H^s_0(\Omega)}
\end{align}
and
\begin{align}\label{operator T estimate 2}
	|T(g,h)|\leq \C\,\norm{g}{H^s_0(\Omega)}\norm{h}{H^s_0(\Omega)}.
\end{align}
\end{proposition}

\begin{proof}
The result follows by a simple interpolation argument. First of all, since $\Omega$ is bounded, we have 
\begin{align}\label{operator T estimate 5}
	\left|\int_{\Omega}\overline{g}(x\cdot\nabla h)\,dx\,\right| \leq \C_1(\Omega)\norm{g}{L^2(\Omega)}\norm{h}{H_0^1(\Omega)}
\end{align}
Moreover, integrating by parts and using Poincar\'e's inequality we get
\begin{align}\label{operator T estimate 6}
	\nonumber\left|\int_{\Omega}\overline{g}(x\cdot\nabla h)\,dx\,\right| &= \left|\int_{\Omega}\left(\nabla\overline{g}\cdot xh+N\overline{g}h\right)\,dx\,\right| 
	\\[5pt]
	&\leq \C(\Omega)\norm{g}{H_0^1(\Omega)}\norm{h}{L^2(\Omega)}+N\norm{g}{L^2(\Omega)}\norm{h}{L^2(\Omega)}\leq \C_2(N,\Omega)\norm{g}{H_0^1(\Omega)}\norm{h}{L^2(\Omega)}.
\end{align}
	
From \eqref{operator T estimate 5} and \eqref{operator T estimate 6} we have that $T\in\mathcal L(H_0^1(\Omega)\times L^2(\Omega))\cap\mathcal L(L^2(\Omega)\times H_0^1(\Omega))$. Therefore, applying \cite[Chapter 1, Theorems 5.1, 12.2 and 12.3]{lions1968problemes} we have $T\in\mathcal L(H^{1-s}_0(\Omega)\times H^s_0(\Omega))$ and, consequently, 
\begin{align*}
	|T(g,h)| \leq \C\,\norm{g}{H^{1-s}_0(\Omega)}\norm{h}{H^s_0(\Omega)},
\end{align*}
with $\C=\C(N,s,\Omega)$. Finally, the second inequality 
\begin{align*}
	|T(g,h)| \leq \C\,\norm{g}{H^s_0(\Omega)}\norm{h}{H^s_0(\Omega)},
\end{align*}
is trivial since, for $s\geq 1/2$, by \cite[Proposition 2.1]{di2010hitchhiker}, we have $H^s_0(\Omega)\hookrightarrow H^{1-s}_0(\Omega)$ with continuous injection. 
\end{proof}

\subsection{The one-dimensional fractional Laplacian}

In this Section, we focus on the one-dimensional case and we present the main spectral properties of the fractional Laplacian. For the rest of this sub-section, let $\Omega=(-1,1)$ and denote by $(-d_x^{\,2})^s_D$ the self-adjoint operator on $L^2(-1,1)$ associated with the closed and bilinear form
\begin{align*}
	\mathcal E(u,v)=\frac{c_{1,s}}{2}\int_{\mathbb R}\int_{\mathbb R}\frac{(u(x)-u(y))(v(x)-v(y))}{|x-y|^{1+2s}}\;dxdy,\;\;u,v\in H_0^s(-1,1).
\end{align*}
More precisely,
\begin{align*}
	D((-d_x^{\,2})^s_D)=\Big\{u\in H_0^s(-1,1):\; (-d_x^{\,2})^su\in L^2(-1,1)\Big\},\quad (-d_x^{\,2})^s_Du=(-d_x^{\,2})^su \;\mbox{ in }\;(-1,1).
\end{align*}
We refer to \cite{claus2019realization} for a rigorous and precise definition of $(-d_x^{\,2})^s_D$.

Then $(-d_x^{\,2})^s_D$ is the realization in $L^2(-1,1)$ of the fractional Laplace operator $(-d_x^{\,2})^s$ with the zero Dirichlet exterior condition $u=0$ in $(-1,1)^c$. It is well-known (see e.g. \cite{servadei2014on}) that $(-d_x^{\,2})^s_D$ has a compact resolvent and its eigenvalues form a non-decreasing sequence of real numbers
\begin{align*}
	0<\lambda_1\leq\lambda_2\leq\cdots\leq\lambda_k\leq\cdots
\end{align*}
satisfying $\lim_{k\to+\infty}\lambda_k=+\infty$. In addition to that, the eigenvalues are of finite multiplicity.

Let $(\phi_k)_{k\geq 1}$ be the orthonormal basis of eigenfunctions associated with the eigenvalues $(\lambda_k)_{k\geq 1}$, that is,
\begin{align}\label{fl_eigen}
	\begin{cases}
		\ffl{s}{\phi_k}=\lambda_k \phi_k, & x\in(-1,1),\quad k\geq 1
		\\
		\phi_k = 0 & x\in(-1,1)^c,
	\end{cases}
\end{align}
and let $s<1/2$. Then, according to \cite[Proposition 9]{servadei2013variational}, the eigenvalues $(\lambda_k)_{k\geq 1}$ can be characterized as
\begin{align}\label{eigen_char1}
	\lambda_{k+1} = \min_{\underset{\norm{\phi}{L^2(-1,1)}=1}{\phi\in\mathbb{P}_{k+1}}} \int_{\RR}\int_{\RR} \frac{|\phi(x)-\phi(y)|^2}{|x-y|^{1+2s}}\,dxdy = \min_{\underset{\norm{\phi}{L^2(-1,1)}=1}{\phi\in\mathbb{P}_{k+1}}} [\phi]_{H^s(\RR)}^2, \;\; \textrm{ for all } k\geq 1
\end{align}
or, equivalently,
\begin{align}\label{eigen_char2}
	\lambda_{k+1} = \min_{\phi\in\mathbb{P}_{k+1}\setminus\{0\}} \frac{\displaystyle\int_{\RR}\int_{\RR} \frac{|\phi(x)-\phi(y)|^2}{|x-y|^{1+2s}}\,dxdy}{\displaystyle\int_{-1}^1 |\phi|^2\,dx} = \min_{\phi\in\mathbb{P}_{k+1}\setminus\{0\}} \frac{[\phi]_{H^s(\RR)}^2}{\norm{\phi}{L^2(-1,1)}^2}, \; \textrm{ for all } k\geq 1.
\end{align}
Here $\mathbb{P}_{k+1}$ denotes the space
\begin{align*}
	\mathbb{P}_{k+1} := \Big\{\phi\in L^2(-1,1)\,:\, \phi = 0 \textrm{ a.e. in }(-1,1)^c,\;\langle \phi,\phi_j\rangle_{L^2(-1,1)}=0\textrm{ for all }j=1,\ldots,k\Big\}.
\end{align*}

Moreover, the eigenfunction $\phi_{k+1}$ is precisely the element of $\mathbb{P}_{k+1}$ attaining the minimum in \eqref{eigen_char1}, that is,
\begin{align*}
	\lambda_{k+1} = [e_{k+1}]_{H^s(\RR)}^2, \;\; \textrm{ for all } k\geq 1.
\end{align*}

To the best of our knowledge, characterizations of the spectrum of the fractional Laplacian analogous to \eqref{eigen_char1}, \eqref{eigen_char2} are not available when $s\geq 1/2$. Nevertheless, we have the following asymptotic result of the eigenvalues, whose proof is contained in \cite[Proposition 3]{kwasnicki2012eigenvalues}, that will be needed in our further analysis.

\begin{lemma}\label{lemm}
Let $1/2\leq s<1$, $\Omega=(-1,1)$ and $(\lambda_k)_{k\geq 1}$ be the eigenvalues of $(-d_x^2)^s_D$. Then the following assertions hold.
	
\begin{enumerate}
	\item[(a)] The eigenvalues $(\lambda_k)_{k\geq 1}$ are simple.
		
	\item[(b)] There is a constant $\gamma=\gamma(s)\ge \pi/2$ such that for $k$ large enough,
	\begin{align}\label{Gap}
		\lambda_{k+1}-\lambda_k\geq \gamma.
	\end{align}
\end{enumerate}
\end{lemma}

\subsection{Well-posedness}

Let us conclude this section by proving the existence and uniqueness of solutions for the fractional Schr\"odinger equation \eqref{fr_schr_control}. 

To this end, let us denote by $\mathcal A$ the realization of $\fl{s}{}$ in $L^2(\Omega)$ with zero Dirichlet exterior condition, i.e., the operator $\mathcal A:\mathcal{D}(\mathcal A)\rightarrow L^2(\Omega)$ defined as 
\begin{align}\label{operator_A}
	\mathcal{D}(\mathcal A)=\left\{u\in H_0^s(\Omega)\,\Big|\, \fl{s}{u}\in L^2(\Omega)\,\right\},\;\;\;\mathcal Au:=-\fl{s}{u}.
\end{align}

See \cite{warma2015fractional} for more details. Moreover, we mention that $\mathcal D(\mathcal A)$ has been fully characterized in \cite{grubb2015fractional}, by employing standard pseudo-differential techniques. 

It is classically known that the operator $\mathcal A$ is self-adjoint and negative. Therefore, thanks to the Stone's theorem (\cite[Chapter XI, Section 13, Theorem 1]{yosida1980functional}), $i\mathcal A$ is the generator of a one parameter $C_0$ group of unitary operators and we have the following well-posedness result (see \cite[Lemmas 4.1.1 and 4.1.5]{cazenave1998introduction}).
\begin{theorem}\label{well posedness thm}
Given $u_0\in L^2(\Omega)$ and $h\in L^2(\omega\times(0,T))$, the system \eqref{fr_schr_control} admits a unique solution
\begin{align*}
	u\in C\big([0,T];L^2(\Omega)\big).
\end{align*}
Moreover, if $u_0\in\mathcal{D}(\mathcal A)$ then
\begin{align*}
	u\in C\big([0,T];\mathcal{D}(\mathcal A)\big)\cap C^1\big([0,T];L^2(\Omega)\big).
\end{align*}
\end{theorem}

\section{The one dimensional controllability problem}\label{1d_sec}
This section is devoted to the analysis of the controllability of \eqref{fr_schr_control} in the one-dimensional case. In particular, we are going to prove here that, when $N=1$, the fractional Schr\"odinger equation \eqref{fr_schr_control} is null controllable by means of an interior control $h$ supported in an open subset $\omega\subset(-1,1)$ if and only if $s\geq 1/2$. Thus, the main result of this section will be the following Theorem.
\begin{theorem}\label{schr sharp} 
Let $\omega\subset (-1,1)$ and $T>0$. Let us consider the following control problem for the one-dimensional fractional Schr\"odinger equation on the interval $(-1,1)$
\begin{align}\label{schr 1d}
	\left\{\begin{array}{ll}
		iu_t+\ffl{s}{u}=g\chi_\omega, & (x,t)\in (-1,1)\times(0,T),
		\\ 
		u\equiv 0, & (x,t)\in (-1,1)^c\times(0,T),
		\\ 
		u(x,0)=u_0(x), & x\in (-1,1).
	\end{array}\right.
\end{align}
\begin{enumerate}
	\item[(i)] If $s\in(1/2,1)$, for any $T>0$ and any given $u_0\in L^2(-1,1)$, there exists $g\in L^2(\omega\times(0,T))$ such that the solution $u$ of \eqref{schr 1d} satisfies $u(x,T)=0$.
	\item[(ii)] If $s=1/2$, for any $T>4$ and any given $u_0\in L^2(-1,1)$, there exists $g\in L^2(\omega\times(0,T))$ such that the solution $u$ of \eqref{schr 1d} satisfies $u(x,T)=0$.
\end{enumerate}
\end{theorem}

\begin{proof}[Proof of Theorem \ref{schr sharp}]
We recall that, by means of HUM, the controllability of \eqref{schr 1d} is equivalent to the observability of the adjoint equation
\begin{align}\label{schr 1d adj}
	\left\{\begin{array}{ll}
		iv_t+\ffl{s}{v}=0, & (x,t)\in (-1,1)\times(0,T),
		\\ 
		v\equiv 0, & (x,t)\in (-1,1)^c\times(0,T),
		\\ 
		v(x,0)=v_0(x), & x\in (-1,1).
	\end{array}\right.
\end{align}
In other words, it will be enough to show that, for all solution $v$ to \eqref{schr 1d adj}, it holds the inequality
\begin{align}\label{obs 1d}
	\norm{v_0}{ L^2(-1,1)}^2\leq \C\int_0^T\norm{v(t)}{ L^2(\omega)}\,dt.
\end{align}
	
To this end, let us write the solution of \eqref{schr 1d adj} in terms of the spectrum of the Dirichlet fractional Laplacian on $(-1,1)$, that is, 
\begin{align}\label{vk_eigen}
	v(x,t)=\sum_{k\geq 1} v_ke^{i\lambda_k t}\phi_k(x),
\end{align}
with 
\begin{align*}
	v_k=\int_{-1}^1v_0(x)\phi_k(x)\,dx.
\end{align*}
	
From \eqref{vk_eigen}, by using the orthonormality of the eigenfunctions in $ L^2(-1,1)$, it is immediate to see that the observability inequality \eqref{obs 1d} may be rewritten as
\begin{align}\label{obs 1d ingham_prel}
	\sum_{k\geq 1} |v_k|^2\leq \C\int_0^T\int_\omega \left|\sum_{k\geq 1}v_ke^{i\lambda_k t}\phi_k(x)\,\right|^2\,dxdt.
\end{align}
	
In order to obtain \eqref{obs 1d ingham_prel} we will employ classical Ingham's techniques (see, e.g., \cite{ingham1936some}, \cite[Section 4]{micu2004introduction} or \cite[Chapter 8, Theorem 8.1.1]{tucsnak2009observation}). In particular, in \cite[Theorem 4.3]{micu2004introduction} it has been proven that, if there is a positive gap between the eigenvalues, namely
\begin{align}\label{gap}
	\liminf_{k\to +\infty}(\lambda_{k+1}-\lambda_k)=\gamma_{\infty}>0,
\end{align}
then, for any finite real time $T>2\pi/\gamma_\infty$ and any finite sequence $\{a_k\}_{k\geq 1}$, it holds the inequality
\begin{align}\label{obs 1d ingham}
	\sum_{k\geq 1} |a_k|^2\leq \C(T)\int_0^T \left|\sum_{k\geq 1}a_ke^{i\lambda_k t}\,\right|^2\,dt,
\end{align}
where the constant $\C(T)$ depends only on the time horizon $T$.
	
Moreover, employing the asymptotic results on the spectrum of the fractional Laplacian contained in the papers \cite{kulczycki2010spectral,kwasnicki2012eigenvalues}, we can easily show that, for any sub-interval $\omega\subset(-1,1)$, there exists a positive constant $\beta>0$ such that it holds the lower estimate
\begin{align}\label{eigen_l2_est}
	\norm{\phi_k}{L^2(\omega)}\geq\beta>0.
\end{align} 
	
We mention that \eqref{eigen_l2_est} can be also obtained as a direct consequence of \cite[Lemma 3.2]{biccari2019controllability}, where the same inequality in the $L^1$ setting has been proved. 
	
Let us now fix $x\in(-1,1)$. Taking $a_k = v_k\phi_k(x)$ in \eqref{obs 1d ingham} we obtain that there is a constant $\C(T)$, independent of $x$, such that it holds the inequality
\begin{align}\label{obs 1d ingham_x}
	\sum_{k\geq 1} |v_k\phi_k(x)|^2\leq \C(T)\int_0^T \left|\sum_{k\geq 1}v_ke^{i\lambda_k t}\phi_k(x)\,\right|^2\,dt.
\end{align}
Then, integrating \eqref{obs 1d ingham_x} over $\omega$ and using the estimate \eqref{eigen_l2_est}, we get that
\begin{align*}
	\beta\sum_{k\geq 1} |v_k|^2 \leq \int_\omega \sum_{k\geq 1} |v_k\phi_k(x)|^2\,dx\leq \C(T)\int_\omega\int_0^T \left|\sum_{k\geq 1}v_ke^{i\lambda_k t}\phi_k(x)\,\right|^2\,dtdx,
\end{align*}
which is clearly equivalent to \eqref{obs 1d ingham_prel}.
	
Hence, in order to conclude our proof, it only remains to check whether \eqref{obs 1d ingham} holds in our case. In other words, we shall see if the gap condition \eqref{gap} is satisfied. 
	
To this end, we recall that from \cite[Theorem 1]{kwasnicki2012eigenvalues} (see also Lemma \ref{lemm}) we know that the eigenvalues of the Dirichlet fractional Laplacian on $(-1,1)$ are given by 
\begin{align*}
	\lambda_k=\left(\frac{k\pi}{2}-\frac{(2-2s)\pi}{8}\right)^{2s}+O\left(\frac{1}{k}\right), \;\;\; \textrm{ as } k\to +\infty,
\end{align*}
while \cite[Propositon 3]{kwasnicki2012eigenvalues} ensures that $\lambda_k$ are simple if $s\geq 1/2$ (see Figures \ref{eigen_plot} and \ref{gap_fig}). In particular, we can readily check that
\begin{align*}
	\gamma_{\infty}=\liminf_{k\to +\infty}(\lambda_{k+1}-\lambda_k)=\begin{cases}
		0, & s<1/2
		\\
		\pi/2, &  s=1/2
		\\
		+\infty, & s>1/2.
	\end{cases}
\end{align*}
	
\begin{figure}[h]
	\centering
	\begin{minipage}{0.45\textwidth}
		\centering
		\includegraphics[scale=0.45]{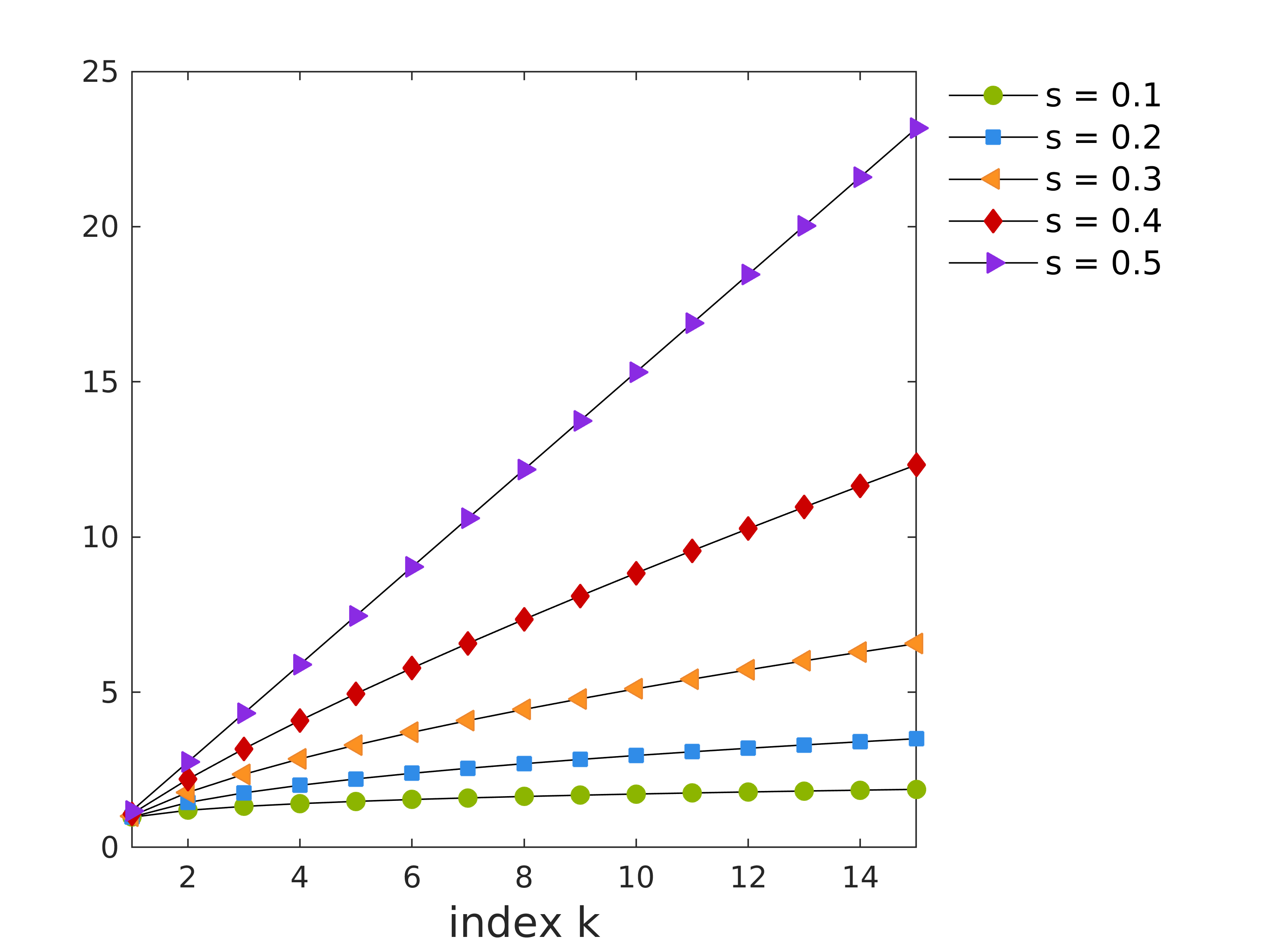}
	\end{minipage}
	\begin{minipage}{0.45\textwidth}
		\centering
		\includegraphics[scale=0.45]{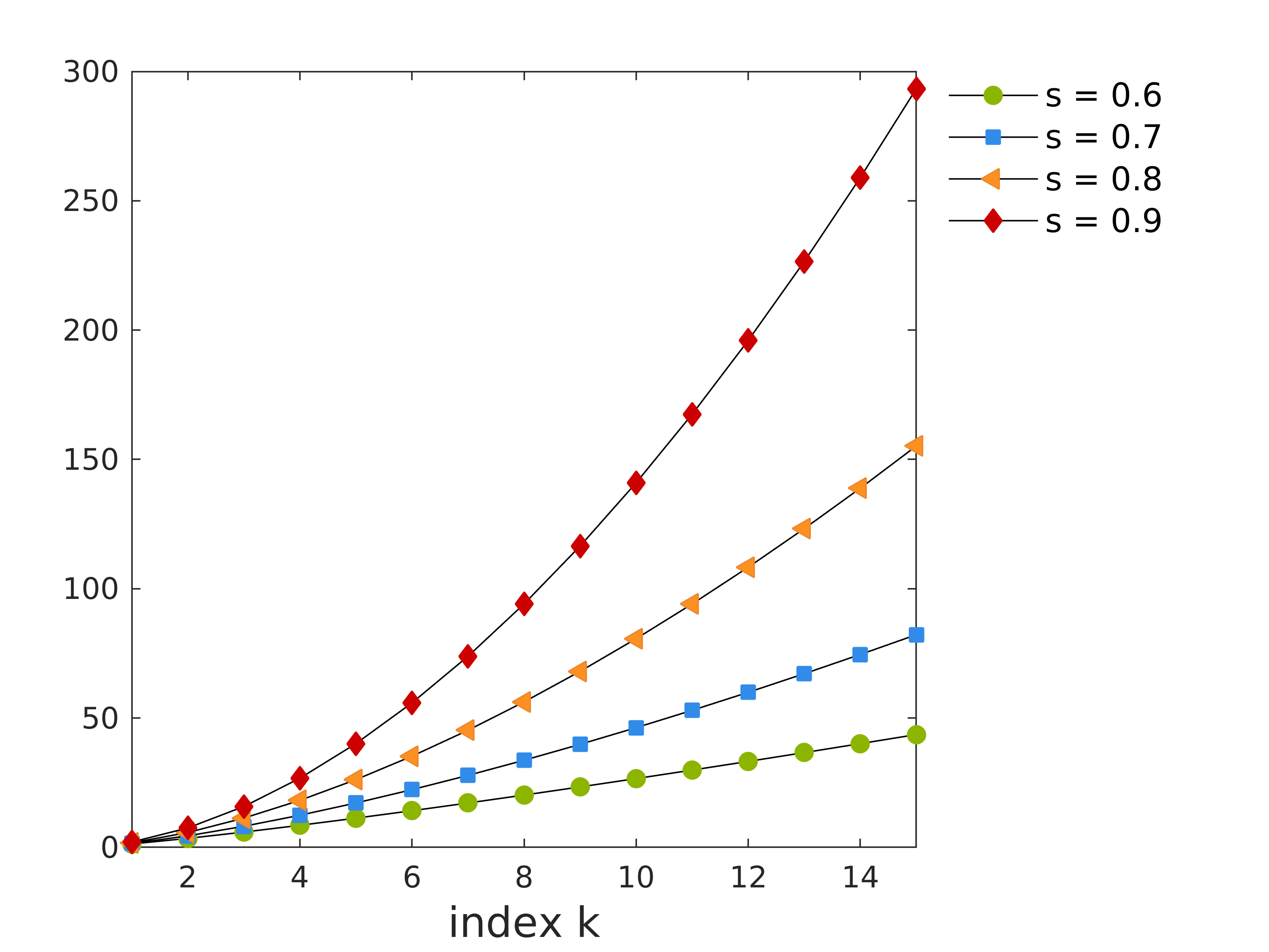}
	\end{minipage}
	\caption{First 15 eigenvalues of the Dirichlet fractional Laplacian $\ffl{s}{}$ on $(-1,1)$ for $s\in (0,1/2]$ (left) and $s\in(1/2,1)$ (right).}
	\label{eigen_plot}
\end{figure}
	
\begin{figure}[h]
	\centering
	\begin{minipage}{0.45\textwidth}
		\centering
		\includegraphics[scale=0.45]{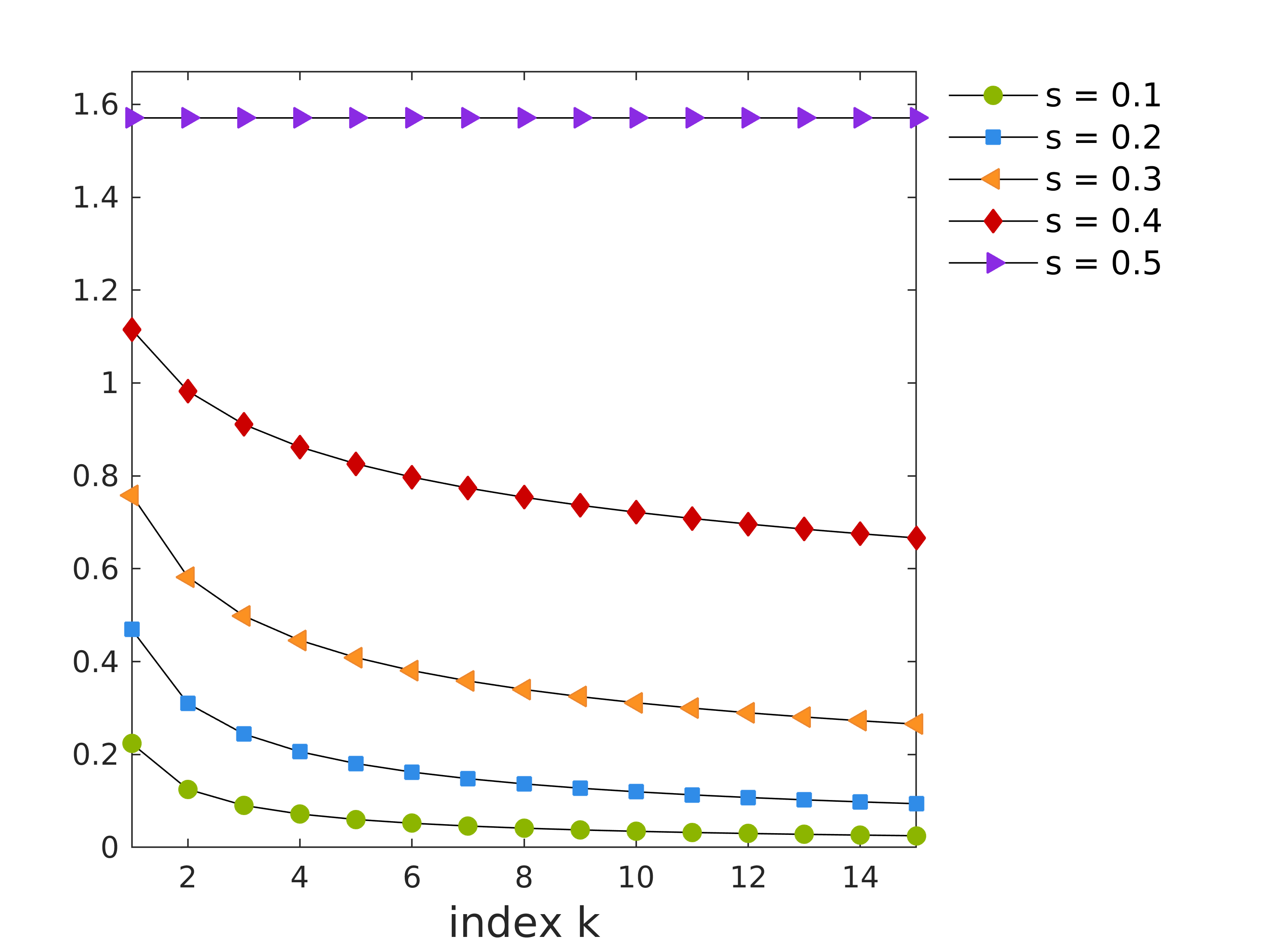}
	\end{minipage}
	\begin{minipage}{0.45\textwidth}
		\centering
		\includegraphics[scale=0.45]{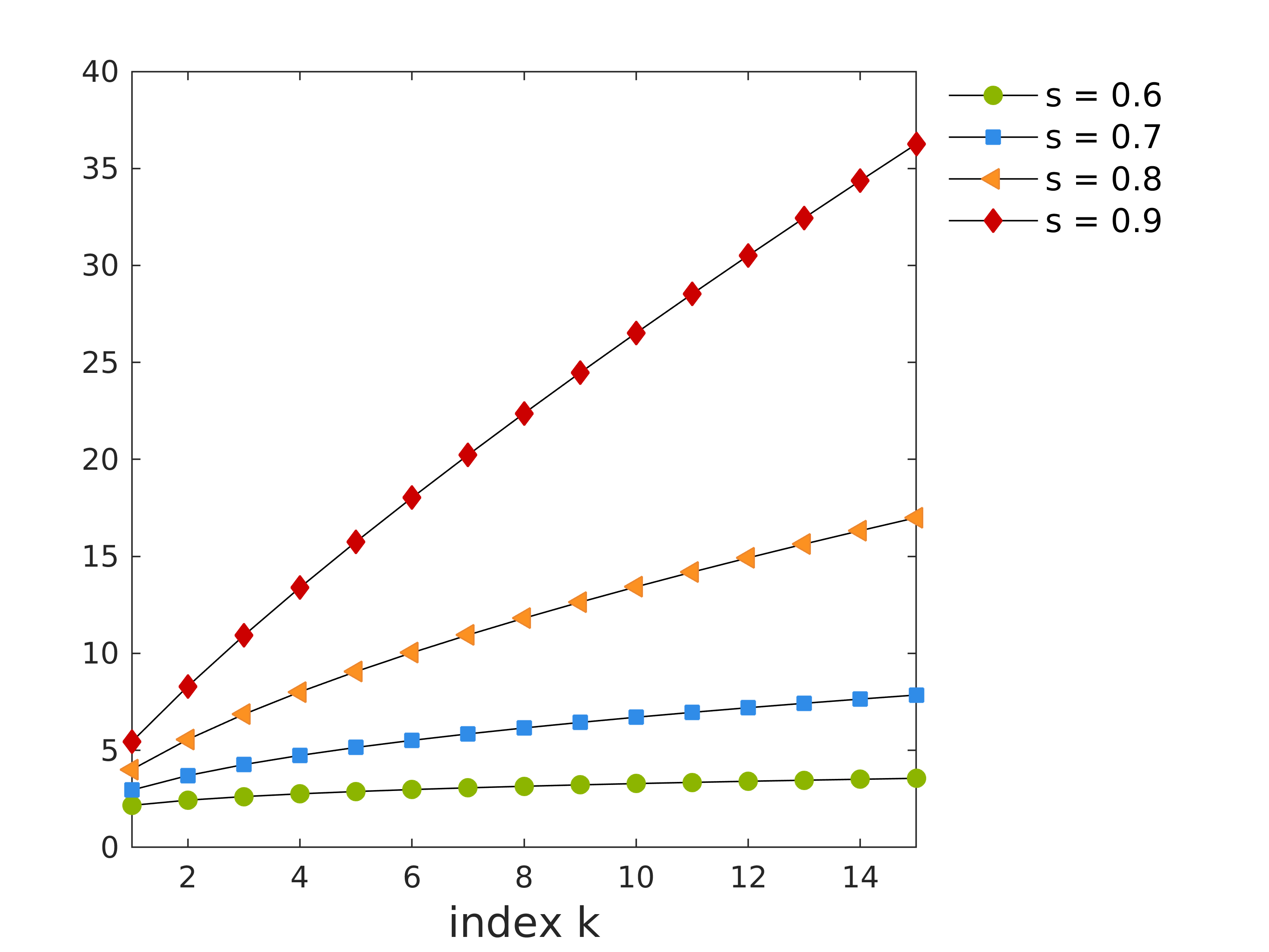}
	\end{minipage}
	\caption{Gap between the first 15 eigenvalues of the Dirichlet fractional Laplacian $\ffl{s}{}$ on $(-1,1)$ for $s\in(0,1/2]$ (left) and $s\in(1/2,1)$ (right).}\label{gap_fig}
\end{figure}
	
This, in turn, implies that \eqref{obs 1d ingham} (and, consequently, \eqref{obs 1d}) holds for any $T>4/\pi$, if $s=1/2$, and for all $T>0$, if $s>1/2$. Our proof is then concluded.
\end{proof}

\section{Fractional Schr\"odinger equation}\label{fl_schr_sec}

We address now the multi-dimensional case, in which the observability inequality \eqref{schr obs} will be obtained by employing the nowadays classical multiplier method. To this end, a series of preliminary technical results will be needed. 

\subsection{Pohozaev-type identity}

In this Section, we introduce one of the main tools that are needed in order to prove the controllability Theorem \ref{schr_control_thm}: a Pohozaev-type identity for the solution of our fractional Schr\"odinger equation. This identity will be obtained employing the classical multiplier method (see, e.g., \cite{komornik1994exact,lions1988controlabilite,lions1988exact}), joint with the Pohozaev identity for the fractional Laplacian that we introduced in Proposition \ref{fl pohozaev prop} (see also \cite[Proposition 1.6]{ros2014pohozaev}). 

\begin{proposition}\label{schr_pohozaev_prop}
Let $\Omega$ be a bounded $C^{1,1}$ domain of $\RR^N$, $s\in [1/2,1)$ and $\delta(x)$ be the distance of a point $x$ from $\partial\Omega$. Moreover, let $\mathcal D(\mathcal A)$ be defined as in \eqref{operator_A}. For any $h\in L^2(\Omega\times(0,T))$ and any initial datum $u_0\in\mathcal D(\mathcal A)$, let $u$ be the corresponding solution of \eqref{fr_schr_control}. Then, the following identity holds
\begin{align}\label{schr_pohozaev_nh}
	\nonumber\Gamma(1+s)^2\int_{\Sigma}\left(\frac{|u|}{\delta^s}\right)^2(x\cdot\nu)\,d\sigma dt =&\, 2s\int_0^T\left \|\fl{\frac{s}{2}}{u(t)}\right \|^2_{ L^2(\RR^N)}\,dt+ \Im \int_{\Omega}\overline{u}(x\cdot\nabla u)\,\big |_0^T\,dx 
	\\ 
	&- \Re\int_0^T\int_{\omega} h\,\Big(N\overline{u}+2x\cdot\nabla\overline{u}\,\Big)\,dxdt,
\end{align}
where $\nu$ is the unit outward normal to $\partial\Omega$ at $x$, $\Gamma$ is the Gamma function and $\Sigma:=\partial\Omega\times [0,T]$.
\end{proposition}

\begin{proof}
	
For proving Proposition \ref{schr_pohozaev_prop}, we are going to apply the classical method of multipliers (see \cite{komornik1994exact,lions1988controlabilite,lions1988exact}), joint with the Pohozaev identity for the fractional Laplacian that we introduced in Proposition \ref{fl pohozaev prop} (see also \cite[Proposition 1.6]{ros2014pohozaev}). 
	
However, this mentioned identity holds under very strict regularity assumptions, which are not necessarily satisfied by the solution $u$ of \eqref{fr_schr_control}. 
	
In order to bypass this regularity issue, we can first obtain \eqref{schr_pohozaev_nh} for solutions of \eqref{fr_schr_control} corresponding to an initial datum $u_0$ given as a linear combination of a finite number of eigenfunctions of the Dirichlet fractional Laplacian on $\Omega$. Indeed, it follows from \cite[Proposition 4]{servadei2013brezis} and \cite[Corollary 1.6]{ros2014dirichlet} that the Pohozaev identity holds in this case. 
	
In a second moment, we can recover the result for any finite energy solution $u$ by using a compactness argument. Since this compactness argument is a standard one, in order to abridge our presentation, in what follows we will skip it and we will always assume that we have all the required regularity to perform our computations.
	
Let us now prove the identity \eqref{schr_pohozaev_nh}. To this end, we multiply our equation \eqref{fr_schr_control} by $x\cdot\nabla\overline{u}+(N/2)\overline{u},$ we take the real part, and we integrate over $Q$, obtaining
\begin{align*}
	\Re\int_0^T\int_\omega h\left(x\cdot\nabla\overline{u}+\frac N2\overline{u}\right)\,dxdt =& \underbrace{\Re\int_Q \fl{s}{u}(x\cdot\nabla\overline{u})\,dxdt}_{A_1}\;+\underbrace{\Re\int_Q\frac N2\overline{u}\fl{s}{u}\,dxdt}_{A_2}
	\\
	&+\underbrace{\Re\int_Q iu_t\left(\frac N2\overline{u}+x\cdot\nabla\overline{u}\right)\,dxdt.}_{A_3}
\end{align*}
	
We now compute the three contributions on the right-hand side separately. For the first integral, employing \eqref{fl pohozaev}, we have
\begin{align*}
	A_1 =&\int_Q\bigg\{\Big[\fl{s}{\Re(u)}\Big]\Big(x\cdot\nabla\Re(u)\Big)+\Big[\fl{s}{\Im(u)}\Big]\Big(x\cdot\nabla\Im(u)\Big)\bigg\}\,dxdt
	\\ 
	=&\,\frac{2s-N}{2}\int_Q\bigg\{\Re(u)\Re\big[\fl{s}{u}\big]+\Im(u)\Im\big[\fl{s}{u}\big]\bigg\}\,dxdt
	\\
	&-\frac{\Gamma(1+s)^2}{2}\int_{\Sigma}\left[\left(\frac{\Re(u)}{\delta^s}\right)^2+\left(\frac{\Im(u)}{\delta^s}\right)^2\right](x\cdot\nu)\,d\sigma dt
	\\
	=&\,\frac{2s-N}{2}\,\Re\int_Q \overline{u}\fl{s}{u}\,dxdt-\frac{\Gamma(1+s)^2}{2}\int_{\Sigma}\left(\frac{|u|}{\delta^s}\right)^2(x\cdot\nu)\,d\sigma dt.
\end{align*}
Thus, using also \eqref{integration}, we obtain
\begin{align*}
	A_1+A_2 &= s\,\Re\int_Q \overline{u}\fl{s}{u}\,dxdt-\frac{\Gamma(1+s)^2}{2}\int_{\Sigma}\left(\frac{|u|}{\delta^s}\right)^2(x\cdot\nu)\,d\sigma dt
	\\
	&= s\,\int_0^T \norm{\fl{\frac s2}{u(t)}}{L^2(\RR^N)}^2\,dt-\frac{\Gamma(1+s)^2}{2}\int_{\Sigma}\left(\frac{|u|}{\delta^s}\right)^2(x\cdot\nu)\,d\sigma dt.
\end{align*}
	
In order to compute the integral $A_3$, we observe that, by considering the function $\psi(x):=|x|^2/4$ we have
\begin{align*}
	\nabla\psi=\frac{x}{2}, \quad \Delta\psi=\frac{N}{2}.
\end{align*}
Hence
\begin{align*}
	A_3 &= \displaystyle\Re\int_Q i(u)_t\,\Big(\,\overline{u}\Delta\psi+2\nabla\psi\cdot\nabla\overline{u}\Big)\,dxdt =-\Im\int_Q u_t\,\Big(\,\overline{u}\Delta\psi+2\nabla\psi\cdot\nabla\overline{u}\Big)\,dxdt 
	\\
	&= -\Im\int_Q\bigg\{-\nabla\Big[u_t\overline{u}\Big]\cdot\nabla\psi +2u\overline{u}\cdot\nabla\psi\bigg\}\,dxdt 
	\\
	&= -\Im\int_Q\bigg[-\overline{u}\nabla u_t\cdot\nabla\psi-u_t\nabla\overline{u} 		\cdot\nabla\psi+2u_t\nabla\overline{u}\cdot\nabla\psi\bigg]\,dxdt
	\\
	&=  \Im\int_Q\bigg[\overline{u}\nabla u_t\cdot\nabla\psi- u_t\nabla\overline{u}\cdot\nabla\psi\bigg]\,dxdt = \Im\int_Q\partial_t\bigg[\overline{u}\nabla u\cdot\nabla\psi\bigg]\,dxdt = \Im\int_Q\partial_t\left[\frac{\overline{u}}{2}(x\cdot\nabla u)\right]\,dxdt 
	\\
	&= \Im\int_{\Omega}\frac{\overline{u}}{2}(x\cdot\nabla u)dx\,\big |_0^T.
\end{align*}
Adding the components previously obtained, we finally get \eqref{schr_pohozaev_nh}. Our proof is then concluded.
\end{proof}

\subsection{Boundary observability}

We now use \eqref{schr_pohozaev_nh}, applied to the solution of \eqref{fr_schr_adj}, to obtain upper and lower estimates for the $ H^{s}_0(\Omega)$ norm of the initial datum $v_0$ on $\Omega$ with respect to the boundary term appearing in the identity \eqref{fl pohozaev}. To this end, we shall first prove the following technical result.
\begin{lemma}\label{cu_lemma}
Let $v\in L^\infty(0,T; H^{s}_0(\Omega))\cap W^{1,\infty}(0,T, H^{-s}(\Omega))$ be a solution of the adjoint equation \eqref{fr_schr_adj} such that 
\begin{align*}
	\frac{|v|}{\delta^s}=0\,\,\textrm{ on }\Sigma.
\end{align*}
Then, $v\equiv 0$.
\end{lemma}

\begin{proof}
For simplicity of notation, let us define 
\begin{align*}
	X:=L^\infty(0,T; H^{s}_0(\Omega))\cap W^{1,\infty}(0,T, H^{-s}(\Omega))
\end{align*} 
and, for every $v\in X$, let us consider the space 
\begin{align*}
	\mathcal{V}:=\bigg\{v\in X\;\big|\,v \textrm{ solves }(\ref{fr_schr_adj}) \textrm{ and }\frac{|u|}{\delta^s}=0 \textrm{ on } \Sigma\,\bigg\}\subset X,
\end{align*}
equipped with the norm endowed by $X$. Clearly it will be enough to prove that $\mathcal{V}=\{0\}$. For doing that, we are going to proceed in two steps.
	
\paragraph{\textit{Step 1.}}
We firstly show that $\textrm{dim}(\mathcal{V})<\infty$. With this purpose, let us define 
\begin{align*}
	z:=iv_t.
\end{align*}
	
With the same arguments employed in the proof of \cite[Appendix I, Lemma 2.1]{lions1988controlabilite}, we can immediately show that $z\in X$. Moreover, it is easy to check that $z$ is also a solution of \eqref{fr_schr_adj} and that the condition $|z|/\delta^s=0$ on $\Sigma$ is satisfied. Therefore, $z\in\mathcal{V}$ and, using the results of \cite{simon1986compact}, we have that the injection
\begin{align*}
	\big\{v\in\mathcal{V}\,;\,iv_t\in\mathcal{V}\big\}\hookrightarrow\mathcal{V}
\end{align*}
is continuous and compact. This, in particular, implies that the dimension of $\mathcal{V}$ is finite.
	
\paragraph{\textit{Step 2.}} We argue by contradiction, assuming that $\mathcal{V}\neq\{0\}$. Since $\mathcal{V}$ has finite dimension, given the linear map $\Phi:\mathcal{V}\to\mathcal{V}$ defined as 
\begin{align*}
	\Phi(\psi)=i\psi_t
\end{align*} 
there exists $\lambda\in\CC$ and $\psi\in\mathcal{V}\setminus\{0\}$ such that 
\begin{align}\label{v eigen}
	i\psi_t=\lambda\psi.
\end{align}
	
Moreover, we have $\lambda\neq 0$. Indeed, if $\lambda=0$, then also $\psi_t=0$ and, since by definition $\psi$ is a solution of (\ref{fr_schr_adj}), this implies that it solves 
\begin{align*}
	\left\{\begin{array}{ll}
		\fl{s}{\psi}=0, & x\in\Omega
		\\
		\psi\equiv 0, & x\in\Omega^c.
	\end{array}\right.
\end{align*}
Hence, thanks to \eqref{fl pohozaev} and \eqref{integration}, and since $|\psi|/\delta^s=0$ on $\partial\Omega$, we have
\begin{align*}
	0 = \int_\Omega \fl{s}{\psi}(x\cdot\nabla\psi)\,dx = \frac{2s-N}{2}\int_\Omega \psi\fl{s}{\psi}\,dx -\frac{\Gamma(1+s)^2}{2}\int_{\partial\Omega} \left(\frac{\psi}{\delta^s}\right)^2\,d\sigma = \frac{2s-N}{2}\norm{\psi}{H^s(\RR^N)}^2.
\end{align*}
	
This implies $\psi\equiv 0$, which is contradictory. Now, for $\lambda\neq 0$, using again the Pohozaev identity (\ref{fl pohozaev}) and (\ref{v eigen}), we have that
\begin{align*}
	0&=\frac{\Gamma(1+s)^2}{2}\int_{\Sigma}\left(\frac{|\psi|}{\delta^s}\right)^2(x\cdot\nu)\,d\sigma dt 
	\\
	&= \frac{2s-N}{2}\Re\int_Q\overline{\psi}\fl{s}{\psi}\,dxdt-\int_Q (x\cdot\nabla\overline{\psi})\fl{s}{\psi}\,dxdt
	\\
	&= -\frac{2s-N}{2}\Re\int_Q\overline{\psi}(i\psi_t)\,dxdt+\Re\int_Q (x\cdot\nabla\overline{\psi})(i\psi_t)\,dxdt 
	\\
	&= -\frac{\lambda(2s-N)}{2}\Re\int_Q\psi\overline{\psi}\,dxdt+\lambda\Re\int_Q (x\cdot\nabla\overline{\psi})\psi\,dxdt
	\\
	&= -\frac{\lambda(2s-N)}{2}\Re\int_Q\psi\overline{\psi}\,dxdt-\frac{\lambda N}{2}\Re\int_Q \psi\overline{\psi}\,dxdt
	\\
	&= -s\lambda\norm{\psi}{ L^2(Q)}^2.
\end{align*}
	
Therefore, we have that also in this case $\psi\equiv0$, and this is again contradictory. Our proof is then concluded.
\end{proof}

With the help of Lemma \ref{cu_lemma}, we can now prove the main result of this sub-section, i.e. the following theorem.

\begin{theorem}\label{schr est thm}
Let $v_0\in\mathcal D(\mathcal A)$, where $\mathcal D(\mathcal A)$ has been defined in \eqref{operator_A}. Then, there exist two positive constants $\C_1$ and $\C_2$, depending only on $s$, $T$, $N$ and $\Omega$, such that 
\begin{enumerate}
	\item[(i)] if $s\in(1/2,1)$, then for any $T>0$ and for all $v$ solution of \eqref{fr_schr_adj} it holds
	\begin{align}\label{schr est}
		\C_1\norm{v_0}{ H^{s}_0(\Omega)}^2\leq \int_{\Sigma}\left(\frac{|v|}{\delta^s}\right)^2(x\cdot\nu)\,d\sigma dt\leq \C_2\norm{v_0}{ H^{s}_0(\Omega)}^2;
	\end{align}
	\item[(ii)] if $s=1/2$, there exists a time $T_0>0$  such that \eqref{schr est} holds for any $T>T_0$.
	\end{enumerate}
\end{theorem}

\begin{proof}
First of all, without loss of generality, we will assume that the function $v$ is smooth enough for our computations. As we did before, this fact can be justified passing through the decomposition of $v$ in the basis of the eigenfunctions $\phi_k$ and then arguing by compactness. 
	
Secondly, since $i\fl{s}{}$ is a skew-adjoint operator, for all $t\in(0,T)$ it holds 
\begin{align}\label{schr norm identities}
	&\norm{v(x,t)}{ L^2(\Omega)}=\norm{v_0}{ L^2(\Omega)}\nonumber 
	\\[7pt]
	&\norm{v(x,t)}{ H^{s}_0(\Omega)}=\norm{v_0}{ H^{s}_0(\Omega)}
	\\[7pt]
	&\PP_1\norm{v_0}{ H^{1-s}_0(\Omega)}\leq\norm{v(x,t)}{ H^{1-s}_0(\Omega)}\leq \PP_2\norm{v_0}{ H^{1-s}_0(\Omega)}.\nonumber 
\end{align}
	
Furthermore, we recall that, by the regularity obtained in the well-posedness Theorem \ref{well posedness thm}, we have that $\fl{s}{v}=-iv_t\in L^2(\Omega)$ and this fact immediately implies $v\in  H^{2s}_{loc}(\Omega)$, due to Proposition \ref{local_reg_prop} (see also \cite{biccari2017addendum,biccari2017local}). In particular, since $s\geq 1/2$ we also have $v\in  H^1_{loc}(\Omega)$. Now, considering \eqref{schr_pohozaev_nh} with $h=0$ we obtain
\begin{align}\label{schr_pohozaev_h}
	\Gamma(1+s)^2\int_{\Sigma}\left(\frac{|v|}{\delta^s}\right)^2(x\cdot\nu)\,d\sigma dt = 2s\int_0^T\norm{\fl{\frac{s}{2}}{v(t)}}{L^2(\RR^N)}^2\,dt + \Im\int_{\Omega}\overline{v}(x\cdot\nabla v)\,\big|_0^T\,dx.
\end{align}
	
For proving our result, we will apply Proposition \ref{operator T lemma} to the last term of \eqref{schr_pohozaev_h}, thus obtaining the following estimate 
\begin{align}\label{est_mintime}
	\left |\,\int_{\Omega}\overline{v}(x\cdot\nabla v)\,dx\,\right | \leq \C\,\norm{v(t)}{ H^{s}_0(\Omega)}\norm{v(t)}{ H^{1-s}_0(\Omega)}.
\end{align}
	
Therefore, it will be necessary to distinguish the two cases $s>1/2$ and $s=1/2$. Indeed, for $s>1/2$, since the $ H^{1-s}_0$ terms are lower order with respect to the $H^{s}_0$ ones, we can deal with them by applying a compactness-uniqueness argument. However for $s=1/2$, since of course the spaces $H^{1-s}_0$ and $H^{s}_0$ coincide, we have to proceed in a different way.
	
\paragraph{\textit{Case $s=1/2$.}} Employing \ref{operator T estimate 2} and \eqref{schr norm identities}, we obtain 
\begin{align*}
	\left|\,\int_{\Omega}\overline{v}(x\cdot\nabla v)\,dx\,\right |\leq \C\,\norm{v(t)}{ H^{1/2}(\Omega)}^2,
\end{align*}
Hence, from \eqref{schr_pohozaev_h} we get
\begin{align}\label{est_mintime_constant}
	(T-2\C)\norm{v_0}{ H^{1/2}(\Omega)}^2\leq\int_{\Sigma}\left(\frac{|v|}{\delta^{1/2}}\right)^2(x\cdot\nu)\,d\sigma dt\leq (T+2\C)\norm{v_0}{ H^{1/2}(\Omega)}^2.
\end{align}
Thus, finally, if $T>2\C:=T_0$, the inequalities
\begin{align*}
	\C_1\norm{v_0}{ H^{1/2}(\Omega)}^2\leq\int_{\Sigma}\left(\frac{|v|}{\delta^s}\right)^2(x\cdot\nu)\,d\sigma dt\leq \C_2\norm{v_0}{ H^{1/2}(\Omega)}^2
\end{align*}
hold with $\C_1,\,\C_2>0$. 
	
\paragraph{\textit{Case $s>1/2$.}} First of all, we have
\begin{align*}
	\Gamma(1+s)^2\int_{\Sigma}\left(\frac{|v|}{\delta^s}\right)^2(x\cdot\nu)\,d\sigma dt \leq 2sT\norm{v_0}{ H^{s}_0(\Omega)}^2+2\left |\,\int_{\Omega}\overline{v}(x\cdot\nabla v)\,dx\,\right |\leq \C_2\norm{v_0}{ H^{s}_0(\Omega)}^2,
\end{align*}
where we used again \eqref{operator T estimate 2} and \eqref{schr norm identities}.
	
Let us now prove the other estimate. By using \eqref{operator T estimate 1} and  \eqref{schr norm identities}, and applying Young's inequality, for all $\varepsilon>0$ we have 
\begin{align*}
	\left |\,\int_{\Omega}\overline{v}(x\cdot\nabla v)\,dx\,\right | \leq \C\varepsilon\,\norm{v_0}{ H^{s}_0(\Omega)}^2 + \frac{\C}{4\varepsilon}\,\norm{v_0}{H^{1-s}_0(\Omega)}^2.
\end{align*}
Thus, choosing $\varepsilon<2sT/\C$, we get that 
\begin{align*}
	(2sT-\C\varepsilon)\norm{v_0}{H^{s}_0(\Omega)}^2 \leq \Gamma(1+s)^2\int_{\Sigma}\left(\frac{|v|}{\delta^s}\right)^2(x\cdot\nu)\,d\sigma dt + \frac{\C}{4\varepsilon}\,\norm{v_0}{H^{1-s}_0(\Omega)}^2.
\end{align*}
	
We conclude by observing that, thanks to a compactness-uniqueness argument, we can prove that there exists a positive constant $\M$, not depending on $v$, such that
\begin{align}\label{cu_est}
	\norm{v_0}{H^{1-s}_0(\Omega)}^2\leq \M\int_{\Sigma}\left(\frac{|v|}{\delta^s}\right)^2(x\cdot\nu)\,d\sigma dt.
\end{align}
	
Indeed, let us assume that the previous inequality does not hold. This implies that there exists a sequence $\{v^j\}_{j\in\NN}\subset  H^{1-s}_0(\Omega)$ of solutions of \eqref{fr_schr_adj} such that 
\begin{align}\label{cu_norm}
	\norm{v^j(0)}{ H^{1-s}_0(\Omega)}=1, \;\;\;\textrm{ for all } j\in\NN
\end{align}
and 
\begin{align}\label{cu_int}
	\lim_{j\to+\infty}\int_{\Sigma}\left(\frac{|v^j|}{\delta^s}\right)^2(x\cdot\nu)\,d\sigma dt=0.
\end{align}
	
From \eqref{cu_norm} we deduce that $\{v^j(0)\}_{j\in\NN}$ is bounded in $ H^{s}_0(\Omega)$ and then, from \eqref{fr_schr_adj} and \eqref{schr norm identities}, $\{v^j\}_{j\in\NN}$ is bounded in $L^\infty(0,T; H^{s}_0(\Omega))\cap W^{1,\infty}(0,T, H^{-s}(\Omega))$. Therefore, by extracting a sub-sequence, that we will still note by $\{v^j\}_{j\in\NN}$, we have
\begin{align*}
	\left\{\begin{array}{lll}
		v^j\rightharpoonup v & \textrm{ in }L^\infty(0,T; H^{s}_0(\Omega)) &\textrm{ weakly *},
		\\
		\partial_t v^j\rightharpoonup \partial_tv & \textrm{ in }L^\infty(0,T; H^{-s}(\Omega)) &\textrm{ weakly *}.
	\end{array}\right.
\end{align*}
	
The function $v\in L^\infty(0,T; H^{s}_0(\Omega))\cap W^{1,\infty}(0,T, H^{-s}(\Omega))$ is a solution of the equation and, from the compactness of the embedding (see \cite{simon1986compact})
\begin{align*}
	L^\infty(0,T; H^{s}_0(\Omega))\cap W^{1,\infty}(0,T, H^{-s}(\Omega))\hookrightarrow C(0,T; H^{1-s}_0(\Omega))
\end{align*}
and \eqref{cu_norm} we deduce that $\norm{v_0}{H^{1-s}_0(\Omega)}=1$. In particular, thanks to the estimates \eqref{schr norm identities} we also have $\norm{v(x,t)}{H^{1-s}_0(\Omega)}\geq\PP_1$. On the other hand, \eqref{cu_int} implies $|v|/\delta^s=0$ on $\Sigma$ and, applying Lemma \ref{cu_lemma}, we immediately have $v\equiv 0$. This is in contradiction with the fact that $v$ has positive $H^{1-s}(\Omega)$-norm. Hence \eqref{cu_est} holds and the proof for $s>1/2$ is concluded. 
\end{proof}

\subsection{Proof of the observability inequality and controllability result}
This section is devoted to the proof of the observability inequality \eqref{schr obs}. In more detail, we are going to prove the following result.

\begin{theorem}\label{obs_thm}
Let $s\in[1/2,1)$ and let $\Omega$ and $\omega$ be as in the statement of Theorem \ref{schr_control_thm}. For any $v_0\in  L^2(\Omega)$, let $v=v(x,t)$ be the corresponding solution of \eqref{fr_schr_adj}.  
\begin{enumerate}
	\item[(i)] If $s\in(1/2,1)$, then for every $T>0$ there exists a positive constant $\C$, depending only on $s$, $T$, $N$, $\Omega$ and $\omega$, such that
	\begin{align}\label{schr_L2_control}
		\norm{v_0}{ L^2(\Omega)}^2\leq \C\int_0^T\norm{v(t)}{ L^2(\omega)}^2\,dt.
	\end{align}
	\item[(ii)] If $s=1/2$, then \eqref{schr_L2_control} holds for any $T>T_0$, where $T_0$ has been introduced in Proposition \ref{schr est thm}.
	\end{enumerate}
\end{theorem}

Before presenting the complete proof of Theorem \ref{obs_thm}, we introduce several preliminary technical lemmas. 

\begin{lemma}\label{lemma_Hs}
Let $s\in[1/2,1)$ and let $\Omega$ and $\omega$ be as in the statement of Theorem \ref{schr_control_thm}. For any $v_0\in \mathcal D(\mathcal A)$, let $v=v(x,t)$ be the corresponding solution of \eqref{fr_schr_adj}.  
\begin{enumerate}
	\item[(i)] If $s\in(1/2,1)$, then for every $T>0$ there exists a positive constant $\C$, depending only on $s$, $T$, $N$, $\Omega$ and $\omega$, such that
	\begin{align}\label{schr_grad_inequality}
		\norm{v_0}{ H^{s}_0(\Omega)}^2\leq \C\int_0^T\norm{v(t)}{H^{s}(\omega)}^2\,dt.
	\end{align}
	\item[(ii)] If $s=1/2$, then \eqref{schr_grad_inequality} holds for any $T>T_0$, where $T_0$ has been introduced in Proposition \ref{schr est thm}.
\end{enumerate}
\end{lemma}

\begin{proof}
First of all, we notice that in the statement of Lemma \ref{lemma_Hs}, we are distinguishing two cases: $s = 1/2$ and $s\in(1/2,1)$. The main difference between this two cases is the need of a strictly positive time $T_0>0$ for \eqref{schr_grad_inequality} to hold when $s=1/2$. This, in turn, is a consequence of the fact that in our proof we will employ  \eqref{schr est}.
	
Notwithstanding that, the procedure for proving \eqref{schr_grad_inequality} follows essentially the same path, both for $s>1/2$ and for $s=1/2$. Hence, in order to abridge our presentation, we are going to present here only the first case, $s>1/2$, leaving to the reader the proof for $s=1/2$. 
	
Thus, until the end of this proof, let us assume $s>1/2$. Moreover, throughout the proof, $\C$ will denote a generic positive constant independent of $v$. This constant may change even from line to line. 
	
Let us now recall the definition of the neighborhood of the boundary $\omega$ that we introduced in \eqref{neigh_bd_def}, which is 
\begin{align*}
	\omega:=\Omega\cap\mathcal{O}_{\varepsilon},\;\;\;\mathcal{O}_{\varepsilon}:=\bigcup_{x\in\Gamma_0} B(x,\varepsilon),
\end{align*} 
with $\Gamma_0$ as in \eqref{partition} (see Figure \ref{partition_fig}). Then, let us consider the cut-off function $\eta\in C^{\infty}(\RR^N)$ defined as follows
\begin{align}\label{eta}
	\begin{cases}
		\eta(x)\equiv 1, & x\in\widehat{\omega},
		\\ 
		0\leq\eta(x)\leq 1, & x\in\omega\setminus\widehat{\omega},
		\\ 
		\eta(x)\equiv 0, & x\in\Omega\setminus\omega,
	\end{cases}
\end{align}
where $\widehat{\omega}:=\Omega\cap\mathcal{O}_{\varepsilon_1}$, with $\varepsilon_1<\varepsilon$, is another neighborhood of the boundary, thinner than $\omega$ (see Figure \ref{neigh_bd}).
	
\begin{figure}[h]
	\includegraphics[scale=1]{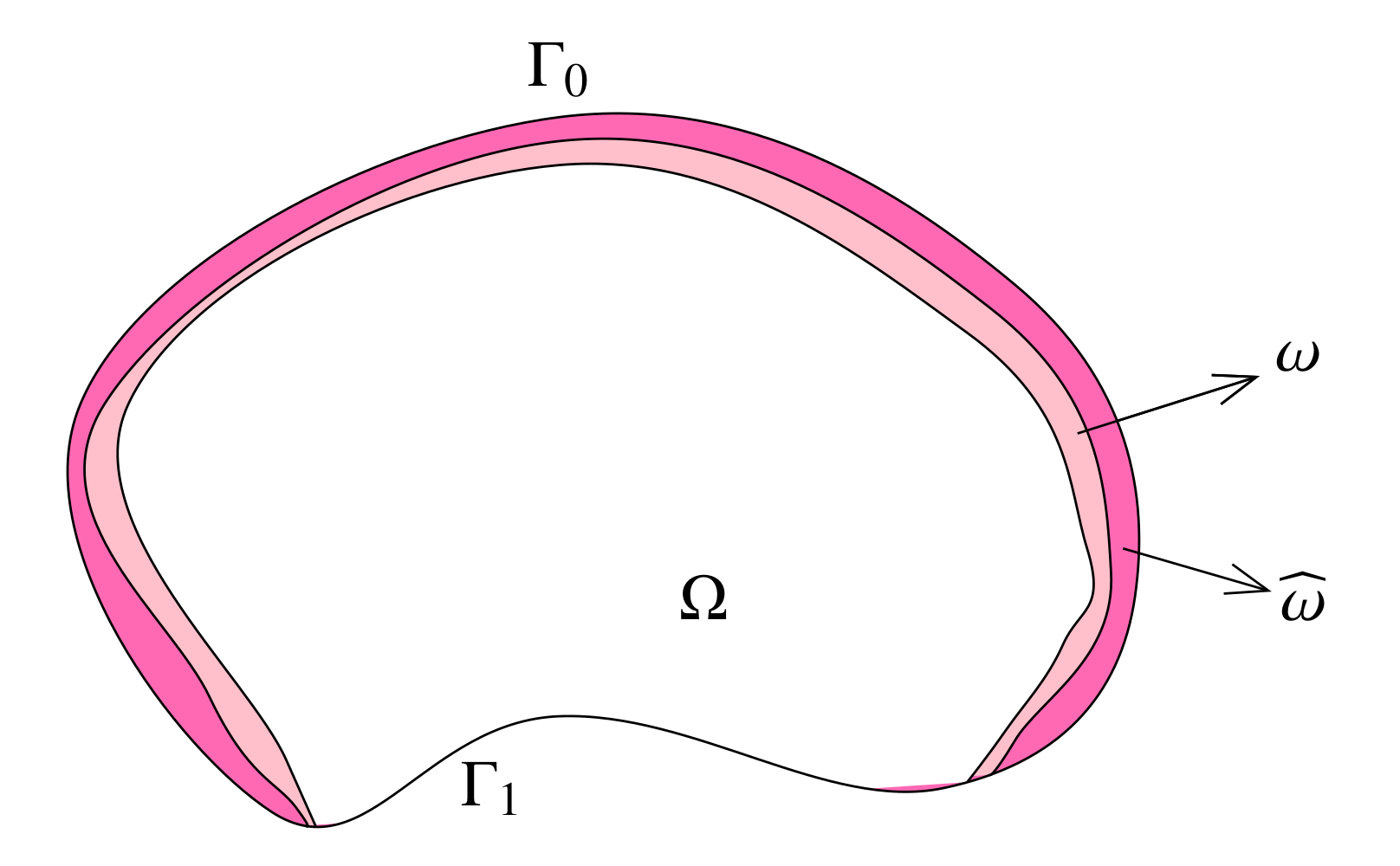}
	\caption{Example of the domain $\Omega$ with the partition of the boundary $(\Gamma_0,\Gamma_1)$ and the two neighborhood of the boundary $\widehat{\omega}$ and $\omega$.}\label{neigh_bd}
\end{figure}
	
Moreover, let us define $w(x,t):=\eta(x)v(x,t)$. It can be easily checked through the definition that the fractional Laplacian of $w$ is given by
\begin{align*}
	\fl{s}{w}=\fl{s}{(\eta v)}=\eta\fl{s}{v}+R
\end{align*}
where $R$ is the reminder term 
\begin{align}\label{R}
	R:= v\fl{s}{\eta} - \cns\,P.V.\int_{\RR^N} \frac{(v(x,t)-v(y,t))(\eta(x)-\eta(y))}{|x-y|^{N+2s}}\,dy.
\end{align}
Therefore, this new function $w$ satisfies the equation 
\begin{align*}
	\begin{cases}
		iw_t+\fl{s}{w}=R, & (x,t)\in Q
		\\ 
		w\equiv 0, & (x,t)\in Q^c
		\\ 
		w(x,0)=w_0, & x\in\Omega.
	\end{cases}
\end{align*}
	
In addition to that, following the procedure presented in \cite{biccari2017local}, employing the definition of $R$, the fact that the function $\eta$ is regular (in particular, Lipschitz), and the classical Cauchy-Schwarz and Young's inequalities, we can show that there exists a constant $\B>0$, not depending on $v$, such that 
\begin{align}\label{fl_norm_est_s}
	\norm{R(t)}{H^{1-s}(\omega)}\leq \B\norm{v(t)}{H^{s}(\omega)}.
\end{align}
Now, starting from \eqref{schr_pohozaev_nh} applied to $w$, we have
\begin{align*}
	\underbrace{\Gamma(1+s)^2\int_{\Sigma}\left(\frac{|w|}{\delta^s}\right)^2(x\cdot\nu)\,d\sigma dt}_{J} =&\; s\int_0^T\norm{\fl{\frac{s}{2}}{w(t)}}{ L^2(\omega)}^2\,dt+\Im \int_{\omega}\overline{w}(x\cdot\nabla w)\,\big |_0^T\,dx
	\\
	&- \Re\int_0^T\int_{\omega}R\Big(N\overline{w}+2x\cdot\nabla\overline{w}\Big)\,dxdt.
\end{align*}
Hence, applying the Cauchy-Schwarz inequality and \eqref{operator T estimate 2}, we obtain 
\begin{align}\label{J_estimate_1}
	J \leq \C\int_0^T\norm{w(t)}{ H^{s}(\omega)}^2\,dt + \C\int_0^T\norm{w(t)}{ L^2(\omega)}\norm{R(t)}{ L^2(\omega)}\,dt + \C\int_0^T\norm{w(t)}{H^{s}(\omega)}\norm{R(t)}{H^{1-s}(\omega)}\,dt.\notag
	\\
\end{align}
From \eqref{J_estimate_1}, using Young's inequality, and the fact that $\norm{\cdot}{L^2(\omega)}\leq\norm{\cdot}{H^{\beta}(\omega)}$ for any $\beta>0$, we get
\begin{align*}
	J \leq \C\int_0^T\norm{w(t)}{ H^{s}(\omega)}^2\,dt + \C\int_0^T\norm{R(t)}{H^{1-s}(\omega)}^2\,dt,
\end{align*}
from which it is straightforward to obtain 
\begin{align*}
	J \leq \C\int_0^T\norm{v(t)}{H^{s}(\omega)}^2\,dt + \C\int_0^T\norm{R(t)}{H^{1-s}(\omega)}^2\,dt.
\end{align*}
Finally, using \eqref{fl_norm_est_s}, we obtain the estimate
\begin{align*}
	J \leq \C\int_0^T\norm{v(t)}{ H^{s}(\omega)}^2\,dt.
\end{align*}
Hence, by using Theorem \eqref{schr est thm}, the inequality \eqref{schr_grad_inequality} follows.
\end{proof}

\begin{lemma}
Let $\Omega\subset\RR^N$ be a bounded regular domain, $s\in(0,1)$, $f\in  H^{-s}(\Omega)$, and let $v\in H_0^s(\Omega)$ be the solution of
\begin{align*}
	\begin{cases}
		\fl{s}{v}=f, & x\in\Omega,
		\\ 
		v\equiv 0, & x\in\Omega^c.
	\end{cases}
\end{align*}
Then, there exists a constant $\gamma>0$ such that 
\begin{align}\label{fr_duality_estimate}
	\norm{v}{H^{s}(\widehat{\omega})}^2\leq \gamma\left[\norm{f}{H^{-s}(\omega)}^2+\norm{v}{L^2(\omega)}^2\right].
\end{align}
\end{lemma}

\begin{proof}
Let us consider again the function $\eta(x)$ defined in \eqref{eta} and let $w=\eta v$. Thus, $w$ satisfies
\begin{align*}
	\begin{cases}
		\fl{s}{w}= \eta f + R:=g, & x\in\omega,
		\\
		w\in  H^{s}_0(\omega),
	\end{cases}
\end{align*}
where $R$ is the reminder term introduced in \eqref{R}.
	
We recall that we have $\norm{R}{ L^2(\omega)}<+\infty$. This, together with the assumption on $f$ and the definition of $\eta$, implies that $g\in  H^{-s}(\omega)$. Thus, there exists some positive constant $\gamma$, independent of $g$, such that (see \eqref{Hs_norm})
\begin{align*}
	\norm{w}{H^{s}(\omega)}^2\leq \gamma\norm{g}{H^{-s}(\omega)}^2.
\end{align*}
	
Expanding this last expression we easily obtain the existence of another positive constant, that we will still note by $\gamma$, such that 
\begin{align*}
	\norm{w}{H^{s}(\omega)}^2 &\leq \gamma \left[\norm{f}{H^{-s}(\omega)}^2 + \norm{v}{L^2(\omega)}^2\,\right].
\end{align*}
Hence, since 
\begin{align*}
	\norm{v}{H^{s}(\widehat{\omega})}^2 = \norm{w}{H^{s}(\widehat{\omega})}^2 \leq \norm{w}{H^{s}(\omega)}^2,
\end{align*}
we finally obtain the estimate \eqref{fr_duality_estimate}.
\end{proof}

\begin{lemma}
For any $T>0$ and $s\geq 1/2$, there exists a positive constant $\C$, depending only on $s$, $T$, $N$, $\Omega$ and $\omega$, such that for all $v$ solution of \eqref{fr_schr_adj} it holds
\begin{align}\label{schr_grad_dual_inequality}
	\norm{v_0}{H^{-s}(\Omega)}^2\leq \C\int_0^T\norm{v(t)}{ H^{-s}(\omega)}^2\,dt.
\end{align}
\end{lemma}

\begin{proof}
Let us define 
\begin{align*}
	\psi(x,t):=\int_0^t v(x,\tau)d\tau+\Theta(x),
\end{align*}
where
\begin{align*}
	\begin{cases}
		\fl{s}{\Theta}=-iv_0, & x\in\Omega,
		\\
		\Theta\in  H^{s}_0(\Omega).
	\end{cases}
\end{align*}
	
It can be readily checked that $\psi$ is a solution of \eqref{fr_schr_adj} with initial datum $\psi(x,0)=\Theta(x)$. Applying \eqref{schr_grad_inequality} to it we have 
\begin{align*}
	\norm{\Theta}{ H^{s}(\Omega)}^2\leq \C\int_0^T\norm{\psi(t)}{ H^{s}(\omega)}^2\,dt
\end{align*}
which, by elliptic regularity, and using \eqref{fr_duality_estimate}, becomes
\begin{align}\label{schr_H-s_inequality_3}
	\norm{v_0}{H^{-s}(\Omega)}^2\leq \C\int_0^T \left(\norm{\psi_t(t)}{ H^{-s}(\omega)}^2 + \norm{\psi(t)}{ L^2(\omega)}^2\,\right)\,dt.
\end{align}
From \eqref{schr_H-s_inequality_3}, proceeding by contradiction and using compactness, we finally obtain \eqref{schr_grad_dual_inequality}.
\end{proof}

\begin{proof}[Proof of Theorem \ref{obs_thm}]
	
First of all, recall that, from \eqref{schr_grad_inequality} and \eqref{schr_grad_dual_inequality}, we have
\begin{align}
	&\norm{v_0}{ H^{s}_0(\Omega)}^2\leq \C\int_0^T \norm{v(t)}{ H^{s}(\omega)}^2\,dt=\C\,\norm{v}{ L^2(0,T; H^{s}(\omega))}^2, \label{schr_L2_Hs_estimate}
	\\ 
	&\norm{v_0}{ H^{-s}(\Omega)}^2\leq \C\int_0^T \norm{v(t)}{ H^{-s}(\omega)}^2\,dt=\C\,\norm{v}{ L^2(
		0,T; H^{-s}(\omega))}^2. \label{schr_L2_H-s_estimate}
\end{align}
We are going to prove \eqref{schr_L2_control} by interpolation. To this end, let us consider the linear operator 
\begin{align*}
	\Lambda: H^{-s}(\Omega)\rightarrow  L^2(0,T; H^{-s}(\omega))
\end{align*} 
defined by 
\begin{align*}
	\Lambda v_0:=\left.\left(e^{it\fl{s}{}}v\right)\right |_{\omega}.
\end{align*}
Clearly, 
\begin{align*}
	\norm{\Lambda v_0}{ L^2(0,T; H^{-s}(\omega))}\leq \C\,\norm{v_0}{ H^{-s}(\Omega)}.
\end{align*}
Furthermore, from \eqref{schr_L2_H-s_estimate} it follows that 
\begin{align*}
	\norm{\Lambda v_0}{ L^2(0,T; H^{-s}(\omega))}\geq \C\,\norm{v_0}{ H^{-s}(\Omega)}.
\end{align*}
	
Therefore, we can consider the closed subspace $X_0:=\Lambda( H^{-s}(\Omega))$ of $ L^2(0,T; H^{-s}(\omega))$ and the linear operator $\Pi:=\Lambda^{-1}$ (since $\Lambda$ is an isomorphism between $ H^{-s}(\Omega)$ and $X_0$). Thus, 
\begin{align}\label{operator_Pi_1}
	\Pi\in\mathcal{L}(X_0,Y_0),
\end{align} 
with $Y_0:= H^{-s}(\Omega)$. If now we set $X_1:=X_0\cap  L^2(0,T; H^{s}(\omega))$, it follows from \eqref{schr_L2_Hs_estimate} that
\begin{align}\label{operator_Pi_2}
	\Pi\in\mathcal{L}(X_1,Y_1),
\end{align} 
with $Y_1:= H^{s}(\Omega)$. From \eqref{operator_Pi_1}, \eqref{operator_Pi_2} and \cite[Theorem 5.1]{lions1968problemes}, we have $\Pi\in\mathcal{L}([X_0,X_1]_{1/2},[Y_0,Y_1]_{1/2})$. Moreover, \cite[Lemma 12.1]{lions1968problemes} yields $[Y_0,Y_1]_{1/2}= L^2(\Omega)$ and from \cite[Theorem 5.1.2]{bergh1976interpolation} we conclude that
\begin{align*}
	[ L^2(0,T; H^{s}(\omega)), L^2(0,T; H^{-s}(\omega))]_{1/2}= L^2(0,T;[ H^{s}(\omega); H^{-s}(\omega)]_{1/2})= L^2(0,T; L^2(\omega)).
\end{align*}
	
Hence, since $X_0$ and $X_1$ are closed subspaces of $ L^2(0,T; H^{-s}(\omega))$ and $ L^2(0,T; H^{s}(\omega))$ respectively, using \cite[Theorem 15.1]{lions1968problemes} we can verify that the norm of the space $[X_0,X_1]_{1/2}$ is equivalent to the norm of $ L^2(0,T; L^2(\omega))$ and, since $\Pi\in\mathcal{L}([X_0,X_1]_{1/2}; L^2(\Omega))$, we finally obtain \ref{schr_L2_control}.
\end{proof}

Having proved the observability of \eqref{fr_schr_adj} from a neighborhood of the boundary of the domain, our controllability theorem is now a direct consequence of a duality argument. This argument being classical (see, e.g., \cite{coron2009control,lions1988controlabilite,lions1988exact}) we are going to omit it here.

\section{Conclusion and open problems and perspectives}\label{open_pb_sec}

In this paper, we have analyzed the interior controllability properties of a non-local Schr\"odinger-type equation involving the fractional Laplace operator $\fl{s}{}$ on a bounded $C^{1,1}$ domain. 

We firstly considered the one-dimensional case, and we employed spectral techniques to prove that the fractional Schr\"odinger equation is null controllable for $s\geq 1/2$, while for $s<1/2$ null controllability fails. Besides, when $s=1/2$ we showed that the null-controllability result holds true provided the time horizon $T$ is large enough.

These one-dimensional results have been then extended to the multi-dimensional case via the employment of multiplier techniques combined with the Pohozaev identity for the fractional Laplacian. 

We conclude this work by briefly presenting some suggestions for open problems.

\begin{itemize}
	\item[•] \textit{Exterior controllability for the fractional Schr\"odinger equation.\,}
	The concept of exterior controllability for evolution equations involving the fractional Laplacian has been recently introduced in several contributions (see \cite{antil2019controllability,louis2018approximate,warma2019approximate,warma2018null,warma2018analysis}). This is the equivalent of the boundary controllability property for local partial differential equations. It takes into account the non-locality of the fractional Laplacian, which yields the ill-posedness of boundary value problems associated to this operator. To the best of our knowledge, so far the exterior controllability property has been analyzed only in the context of fractional heat and wave equations. To consider the case of a fractional Schr\"odinger equation is then an interesting open problem.
	\item[•] \textit{Micro-local analysis for the solutions of evolution equations with the fractional Laplacian.\,} Geometric Optics expansion for the solutions of an evolution PDE is a very powerful tool that, if well developed, can provide relevant information on the propagation of a wave-type equation and on the way in which its solutions interact with the boundaries or interfaces in the domain of definition (see, e.g. \cite{ralston1982gaussian,rauch2005polynomial}). This tool has also been applied, for instance in \cite{bardos1992sharp}, to the study of controllability properties for different classes of partial differential equations. As far as we know, a complete analysis of micro-local properties of non-local models such as the ones we considered in this contribution has not been developed yet and it would then be an interesting research direction. 
	\item[•] \textit{Rigorous justification of the minimal controllability time in the case $s=1/2$.\,} In the proof of Theorem \ref{schr est thm}, when $s=1/2$, we introduced a minimal (strictly positive) time for \eqref{schr est} to hold. This came from the employment of \eqref{est_mintime} and from the fact that, if $s=1/2$, the spaces $H^s_0(\Omega)$ and $H^{1-s}_0(\Omega)$ clearly coincide and no compactness argument may be used. In particular, in this case, if $T$ is not large enough the constant in the lower estimate in \eqref{est_mintime_constant} would be negative. Although this minimal time requirement may appear a technicality associated with our strategy for proving Theorem \ref{schr est thm}, we actually believe that this constraint cannot be removed. Indeed, through a simple stationary phase approach (see \cite[Chapter 3]{whitham1999linear}), it is possible to show that, for $s=1/2$, the velocity of propagation of the solution of \eqref{fr_schr_control} is finite (whereas it is infinite when $s>1/2$). By means of standard micro-local analysis techniques (see, e.g. \cite{bardos1992sharp}) one then expects that it is required a minimal positive time $T_0>0$ for the observability inequality \eqref{schr obs} (hence, for the controllability of \eqref{fr_schr_control}) to hold. A rigorous justification of this simple observation is, to the best of our knowledge, still missing and represent a very interesting issue which deserves a deeper investigation.
\end{itemize}

\bibliographystyle{ams}
\bibliography{biblio}

\end{document}